\newtheorem{Theorem}{Theorem}[section]
\newtheorem{Corollary}[Theorem]{Corollary}
\newtheorem{Lemma}[Theorem]{Lemma}
\newtheorem{Proposition}[Theorem]{Proposition}
{ \theoremstyle{definition}
\newtheorem{Definition}[Theorem]{Definition}
\newtheorem{Example}[Theorem]{Example}
\newtheorem{Remark}[Theorem]{Remark}
\newtheorem{Assumption}[Theorem]{Assumption}}
\numberwithin{equation}{section}
\def\N{{\mathbb N}}
\def\Q{{\mathbb Q}}
\def\C{{\mathbb C}}
\def\l{\left}
\def\r{\right}
\def\p{\prime}
\def\wtilde{\widetilde}
\def\what{\widehat}
\def\ul{\underline}
\def\ol{\overline}
\def\GL{{\rm GL}}
\def\Constr{{\rm Constr}}
\def\Hom{\mathop{\rm Hom}}
\def\End{\mathop{\rm End}}
\def\cE{{\mathcal E}}
\def\cM{{\mathcal M}}
\def\cN{{\mathcal N}}
\def\cL{{\mathcal{ L}}}
\def\g{{\mathfrak g}}
\def\veps{\varepsilon}
\def\a{\alpha}
\def\be{\beta}
\def\na{\nabla}
\def\liegal{\mathfrak g\mathfrak a\mathfrak l}
\def\Gal{{\rm Gal}_{\partial}}
\def\Aut{{\rm Aut}}
\def\constr{\mathfrak c\mathfrak o\mathfrak n\mathfrak s\mathfrak t\mathfrak r}
\def\gK{\g_{\rm Katz}}
\def\GK{G_{\rm Katz}}
\begin{document}
\allowdisplaybreaks

\newcommand{\arXivNumber}{1912.10567}

\renewcommand{\PaperNumber}{054}

\FirstPageHeading

\ShortArticleName{Reduced Forms of Linear Differential Systems and the Intrinsic Galois--Lie Algebra of Katz}

\ArticleName{Reduced Forms of Linear Differential Systems\\ and the Intrinsic Galois--Lie Algebra of Katz}

\Author{Moulay BARKATOU~$^\dag$, Thomas CLUZEAU~$^\dag$, Lucia DI VIZIO~$^\ddag$ and Jacques-Arthur WEIL~$^\dag$}

\AuthorNameForHeading{M.~Barkatou, T.~Cluzeau, L.~Di Vizio and J.-A.~Weil}

\Address{$^\dag$~XLIM, UMR7252, Universit\'e de Limoges et CNRS,\\
\hphantom{$^\dag$}~123 avenue Albert Thomas, 87060 Limoges Cedex, France}
\EmailD{\href{mailto:moulay.barkatou@unilim.fr}{moulay.barkatou@unilim.fr}, \href{mailto:thomas.cluzeau@unilim.fr}{thomas.cluzeau@unilim.fr},
\href{mailto:jacques-arthur.weil@unilim.fr}{jacques-arthur.weil@unilim.fr}}
\URLaddressD{\url{http://www.unilim.fr/pages_perso/moulay.barkatou/},\newline
\hspace*{13.5mm}\url{http://www.unilim.fr/pages_perso/thomas.cluzeau/},\newline
\hspace*{13.5mm}\url{http://www.unilim.fr/pages_perso/jacques-arthur.weil/}}

\Address{$^\ddag$~Universit\'e Paris-Saclay, UVSQ, CNRS, Laboratoire de math\'ematiques de Versailles,\\
\hphantom{$^\ddag$}~78000, Versailles, France}
\EmailD{\href{mailto:lucia.di.vizio@math.cnrs.fr}{lucia.di.vizio@math.cnrs.fr}}
\URLaddressD{\url{http://divizio.perso.math.cnrs.fr/}}

\ArticleDates{Received January 20, 2020, in final form June 04, 2020; Published online June 17, 2020}

\Abstract{Generalizing the main result of [Aparicio-Monforte A., Compoint E., Weil J.-A., \textit{J.~Pure Appl. Algebra} \textbf{217} (2013), 1504--1516], we prove that a linear differential system is in reduced form in the sense of Kolchin and Kovacic if and only if any differential module in an algebraic construction admits a constant basis. Then we derive an explicit version of this statement. We finally deduce some properties of the Lie algebra of Katz's intrinsic Galois group.}

\Keywords{linear differential systems; differential Galois theory; Lie algebras; reduced forms}

\Classification{34M03; 34M15; 34C20}

\section{Introduction}

Let us consider the field of rational functions $\C(x)$, with the derivation $\partial=\frac{{\rm d}}{{\rm d}x}$, and a linear differential system $\partial \vec y=A\vec y$, where $A$ is a square matrix of order $n$ with coefficients in~$\C(x)$. One can attach to such an object an algebraic group, called the differential Galois group, whose geometric properties encode the algebraic properties of the solutions of the linear differential system. The problem of calculating explicitly the differential Galois group of $\partial \vec y=A\vec y$ is old and still difficult. Among the several references, we cite \cite{AmMiPo18a,Compoint-Singer,Feng-hrushovski,Hrushcomp,Ho07b} that do not make any assumption on the order $n$ of the system. Implemented (or implementable) algorithms exist only for small dimensions $n$. See for example \cite{CoSa18a,Hessinger,Kovacic-algo,NgPu10a,Person,Singer-Ulmer-2nd3rdorder,HoRaUlWe99a,vanheoij-4thorder}.

Instead of calculating directly
the differential Galois group of $\partial \vec y=A\vec y$, one can try to study, or calculate, the Lie algebra of the differential Galois group, called {\em Galois--Lie algebra} in what follows. The Galois--Lie algebra already contains a significant part of the information.
Kolchin and Kovacic have proved that one can transform $\partial \vec y=A\vec y$ into an equivalent system $\partial \vec y=B\vec y$
defined over a finite extension $k$ of $\C(x)$, such that $B$ belongs to the set of $k$-rational points of the Galois--Lie algebra
(see \cite[Proposition 1.31]{vdPutSingerDifferential}). One can even prove that the Galois--Lie algebra is then ``generated'' by the entries of the matrix $B$. These ideas are formalized in \cite[Section~2.3]{ACW}.
The linear differential system $\partial \vec y=B\vec y$ is called {\em a reduced form} of $\partial \vec y=A\vec y$. A~linear differential system $\partial \vec y=A\vec y$ is said to be {\em in reduced form} if $A$ is a $k$-rational point of the Galois--Lie algebra.

{\bf Main results.}
In the present work, we prove three results on reduced forms and their relation to the Galois--Lie algebra.
First of all, we prove that a system is in reduced form if and only if any differential module in a construction admits a constant basis (see Theorem \ref{thm:main},
in particular the equivalence $1\Leftrightarrow 3$).
This extends the criterion for reduced form from \cite{ACW} which concerned only invariant lines (see the equivalence $1\Leftrightarrow 2$ in Theorem \ref{thm:main}).

Our second contribution is Theorem \ref{thm:main-explicit} that gives an effective characterization of a gauge transformation that transforms a
linear differential system into a reduced one. It may be consi\-dered as an effective counterpart of Theorem~\ref{thm:main}, based on the ``local''
data of the semi-invariants. Compared to the original result by Kolchin and Kovacic on the existence of reduced forms, we have to perform an algebraic extension
of the base field $k$,
which may not be optimal, in order to gain the effectivity.
Theorem \ref{thm:main-explicit}
extends a result
which only appeared in the course of the proof of \cite[Proposition 27]{ACW}, under the assumption that the differential system is completely
reducible.

Finally, we prove Theorem \ref{katz:algebra} on the Lie algebra of the intrinsic Galois group, introduced in~\cite{Katzbull}, where the idea of focusing on the Galois--Lie algebra rather than on the differential Galois group itself is pursued. Indeed, Katz introduces another Galois group for the linear differential system $\partial \vec y=A\vec y$, called the generic or the intrinsic Galois group. Then he considers the Lie algebra of such a group, for which he gives a conjectural description equivalent to a well-known conjecture of Grothendieck on the algebraicity of the solutions of a linear differential system. We will call it {\em the Katz algebra}. In the last section, we gather material from \cite{Andregrothendieck,Be92c,Katzbull,vdPutSingerDifferential} and show how our criteria for reduced forms, combined with standard Tannakian tools, clarify the structure of the Katz algebra. Namely, for a reductive group, Theorem \ref{katz:algebra} shows that the Katz algebra is a $k$-form of the Galois--Lie algebra.

{\bf The algorithm in \cite{barkatou2016computing}.}
In the latter reference, we showed how one can compute the Galois--Lie algebra of an (absolutely) irreducible linear differential system and hence (a good part of) its differential Galois group. Notice that in~\cite{DrWe19a}, it is shown how to derive the (connected) Galois group of the reduced form from the Galois--Lie algebra, so that one can actually recover the connected component of the Galois group of the original system.

The algorithm selects a Lie algebra that is potentially the Katz algebra and checks that there exists a gauge transformation that transforms its generators into
a set of constant generators of a Lie algebra, that is a candidate for being the Lie algebra of the Galois group.
To do so, it uses Theorem \ref{thm:main-explicit} in the particular case of a completely reducible differential system, hence in the case
considered in \cite{ACW}. See \cite[Lemma 5.1]{barkatou2016computing}. Theorem \ref{katz:algebra} completes the mathematical background of the algorithm, although it is technically not needed in it.

{\bf Organization of the paper.} In Section~\ref{section2}, we recall some notions on differential modules, tensor constructions and differential Galois theory. In Section~\ref{section3}, we prove our two theorems on the criteria for a linear differential system to be in reduced form. In Section~\ref{section4}, we apply the previous results to the study of the Katz algebra.

\section{Notation and definitions}\label{section2}

We consider a characteristic zero differential field $(k,\partial)$, that is a characteristic zero field $k$ with a derivation $\partial\colon k\to k$, such that $\partial(a+b)=\partial(a)+\partial(b)$ and
$\partial(ab)=\partial(a)b+a\partial(b)$, for any $a,b\in k$. We suppose that the subfield of constants $C:=k^\partial=\{f\in k\colon \partial f=0\}$ is algebraically closed.

\subsection{Differential modules}\label{section2.1}

A differential module $\cM=(M,\nabla)$ over $k$ (of rank $n$) is a $k$-vector space $M$ of dimension $n$, with a $C$-linear map $\na\colon M\to M$ such that $\na(fm)=\partial(f)m+f\nabla(m)$ for any $f\in k$ and any $m\in M$. For a detailed exposition on differential modules, see \cite[Section~2.2]{vdPutSingerDifferential}. We denote by~$M^\na$ or~$\ker \na$ the set of {\em horizontal elements} of~$\cM$, that is elements $m \in M$ satisfying $\na(m) =0$. This is a $C$-vector space of dimension at most~$n$.

{\bf Main properties of differential modules.}
Given a basis (denoted as a row) $\ul e :=(e_1, \dots, e_n)$ of $M$ over $k$,
the action of $\nabla$ with respect to the basis $\ul e$
is described by a square matrix $A\in M_n(k)$ as follows
\begin{gather*}
\na\ul e=-\ul e A.
\end{gather*}
For any $\vec y\in k^n$ such that $\ul e \vec y$ represents an element of $M$, we have
$\na(\ul e \vec y)=\ul e(\partial \vec y-A \vec y)$.
{Thus horizontal elements of $\cM$ correspond to solutions over $k$ of the differential system}
\begin{gather*}
[A]\colon \ \partial \vec y=A\vec y.
\end{gather*}
We say that $[A]\colon \partial \vec y=A\vec y$ is the linear differential system associated to $\cM$ with respect to the basis~$\ul e$.

If $\ul f=\ul e P$, with $P\in\GL_n(k)$, is another basis of $M$, then the horizontal elements of $\cM$ are of the form $\ul f \vec z$, with $\vec z\in k^n$, where $\vec{z}$ verifies the linear differential system
\begin{gather*}
\partial(\vec z)=P[A]\vec z,
\qquad
P[A]:=P^{-1}AP-P^{-1}\partial(P).
\end{gather*}
We say that two matrices $A, B \in M_n(k)$ are \emph{equivalent over~$k$} if there exists a \emph{gauge transformation} $P\in\GL_n(k)$ such that $B=P[A]$.

Notice that one can extend the scalars of $M$ to a field extension $k^\p$ of $k$, equipped with an extension of $\partial$. The Leibnitz rule allows to extend $\nabla$ to $M\otimes_k k^\p$, so that
it makes sense to consider gauge transformations in $\GL_n(k^\p)$, as counterpart of basis changes
of $\cM\otimes_k k^\p=(M\otimes_k k^\p,\nabla)$.

{\bf Algebraic constructions.} Let us start by formalizing what we mean by construction of linear algebra.
\begin{Definition}\label{defn:construction}
A \emph{construction of linear algebra} is a finite iteration of the basic constructors
$\oplus$ (direct sum), $\otimes$ (tensor product), $\ast$ (dual),
${\rm Sym}^r$ ($r$-th symmetric power, for $r\in\N$) and $\wedge^r$ ($r$-th exterior power).
Given a construction of linear algebra and a vector space~$M$, we denote by $\Constr(M)$ the
finite-dimensional $k$-vector space obtained by applying the construction to~$M$.
\end{Definition}

Given vector spaces $M_1$ and $M_2$ with respective bases $\ul{e}$ and $\ul{f}$,
an application of each of the above basic constructors produces \emph{canonically} a new basis:
for instance, $(e_1,\ldots,e_n, f_1,\ldots, f_m)$ is a basis for $M_1 \oplus M_2$,
$( e_i \otimes f_j \, | \, i=1,\ldots,n; j=1,\ldots,m )$
is a basis for $M_1\otimes M_2$, and so on (see \cite[Section~2.2, p.~42]{vdPutSingerDifferential}, and \cite[Section~3]{ACW}). This way, given a vector space $M$ with basis~$\ul{e}$ and a construction~$\Constr$, we iteratively construct a \emph{canonical} basis, denoted $\Constr(\ul{e})$, of $\Constr(M)$. Two different constructions may produce two isomorphic vector spaces. In this case, there is a canonical isomorphism between the two vector spaces which identifies the canonical bases.

\begin{Remark}We need to make some comments on our lists of algebraic constructions.
\begin{enumerate}\itemsep=0pt
 \item Notice that this list implicitly contains $\Hom$ (homomorphisms) and $\End$ (endomorphisms) via the canonical identifications $\Hom(M_1,M_2) \cong M_1 \otimes M_2^{\ast}$ and $\End(M)\cong M \otimes M^{\ast}$. See also Example~\ref{exa:EndM} below.
 \item Some authors use additional basic constructors such as quotients and subspaces. We choose not to do so to avoid the rising of apparent singularities (see \cite[Remark~18]{ACW}) and because this list is sufficient for our purposes. See Section~\ref{subsec:main-explicit}, where we need an ordinary point for all constructions.
\end{enumerate}
\end{Remark}

Let $H$ denote a linear algebraic group, with Lie algebra $\mathfrak{h}$, acting on $M$. Let the linear map $\sigma\in H$ have a matrix $U$ in the basis $\ul{e}$. The morphism induced by $\sigma$ on $\Constr(M)$ is denoted by $\Constr(\sigma)$. Its matrix in the basis $\Constr(\ul{e})$ is denoted by $\Constr(U)$ and the map $U\mapsto \Constr(U)$ is a group morphism. Similarly, for $h\in \mathfrak{h}$, it acts on $M$ as a linear derivation~$D_h$ with matrix~$N$. The action of this linear derivation~$D_h$ on the basis $\Constr(\ul{e})$ of $\Constr(M)$ induces a matrix~$\constr(N)$; the map $N\mapsto \constr(N)$ is a Lie algebra morphism. In what follows, we will use the gothical letters $\constr$ for these constructions ``in the sense of Lie algebras''.

Note that the entries of $\Constr(U)$ are \emph{polynomials} in the entries of $U$ and in $1/\det(U)$; the $1/\det(U)$ is needed to have duals as, when $\Constr(M)=M^{\ast}$, $\Constr(U)=\big(U^{-1}\big)^{\rm T}$. The entries of $\constr(N)$ are \emph{linear forms} in the entries of $N$.
See \cite[Section~2.4, p.~53]{vdPutSingerDifferential} and \cite[Sections~3.1 and~3.2]{ACW}.

Let $P\in\GL_n(k)$. If we consider a change of basis $\ul f=\ul e P$ in the vector space $M$, then the corresponding change-of-basis matrix in $\Constr(M)$ will be given by $\Constr(P)$ and we have $\Constr(\ul f)=\Constr(\ul e)\Constr(P)$.
The constructions of linear algebra apply functorially to differential modules.
Let $\cM=(M,\nabla)$ be a differential module over~$k$. The operator $\nabla$ induces a $C$-linear map from $\Constr(M)$ to $\Constr(M)$,
that we will also denote by $\nabla$,
defining a differential module structure over $\Constr(M)$. We will call the latter a \emph{construction of $\cM$} (or \emph{tensor construction}) and denote it
$\Constr(\cM)=(\Constr(M),\na)$.

Let $-A$ be the matrix of $\na$ with respect to a basis $\ul e$ as defined above.
For any construction $\Constr(\cM)$, the matrix of $\na$ with respect to $\Constr(\ul e)$ will be $-\constr(A)$. For example, in the case $\Constr(\cM) = \cM^*$, $\constr(A) = -A^{\rm T}$, while if $\Constr(\cM) = \cM\otimes_k\cM^\ast$, then we have $\constr(A) = A\otimes I_n -I_n\otimes A^{\rm T}$.
See \cite[Sections~3.1 and~3.2]{ACW}.

\begin{Example}\label{exa:EndM}
Let $\cM=(M,\nabla)$ be a differential module over $k$ and let $-A$ be the matrix of~$\na$ with respect to a~basis~$\ul e$. We consider the differential module $\End(\cM)=(\End_k(M),\na)$. If $\varphi\in\End_k(M)$ then $\na(\varphi)$ is the endomorphism of $M$ defined by $\nabla(\varphi)(m)=\nabla(\varphi(m))-\varphi(\nabla(m))$, for all $m\in M$.
If $F$ is the matrix of $\varphi$ with respect to the basis $\ul e$, one can check that we have
$\nabla(\varphi)(\ul e)=\ul e(\partial F-AF+FA)$. If we denote by {square matrices}
$F$ the elements of $\End(M)$ with respect to the basis induced by $\ul e$,
then the linear differential system associated to $\End(\cM)$ with respect to
the basis induced by $\ul e$ is $\partial F =AF-FA$.

The horizontal elements of $\End(\cM)$ are the elements $\varphi\in\End_k(M)$ such that we have $\nabla(\varphi(m))=\varphi(\nabla(m))$, $\forall\, m\in M.$ Thus the horizontal elements of the differential module $\End(\cM)$ are exactly the $k$-endomorphisms of $M$ which commute with~$\na$. They are called \emph{differential module endomorphisms} of~$\cM$. They form a~$C$-algebra denoted by $\cE{(\cM)}$ which is called the \emph{eigenring} of~$\cM$.

Among all the possible constructions, the differential module $\cM\otimes_k\cM^\ast$ will play a special role in the exposition below. If we identify it canonically to $\End(\cM)$, then the linear differential system associated to $\cM\otimes_k\cM^\ast$ in the basis induced by $\ul e$ is exactly
$\partial F =AF-FA$. The set $\cE([A])$ of matrices $F \in M_n(k)$ satisfying the above matrix differential equation is called the \emph{eigenring} of the system $[A]\colon \partial \vec y=A\vec y$. It is isomorphic (as a $C$-algebra) to $\cE(\cM)$.
\end{Example}

\subsection{Picard--Vessiot extensions}

We introduce very briefly some notions of differential Galois theory, with the main purpose of fixing the notation.
There exists several detailed introduction to the topic. We refer to \cite{vdPutSingerDifferential} for a general introduction
and to \cite{ACW} for more specific notions which are needed in this paper.

Let us consider the linear differential system $[A]\colon \partial \vec y=A\vec y$.
To any such system we can attach a $k$-algebra $R$, with an extension of $\partial$,
having the following properties:
\begin{enumerate}\itemsep=0pt
 \item[1)] there exists $U\in\GL_n(R)$ such that $\partial U=AU$;
 \item[2)] the entries of $U$ plus $\det U^{-1}$ generate $R$ over $k$, namely $R=k\big[U,\det U^{-1}\big]$;
 \item[3)] $R$ has no proper non-trivial ideals stable by $\partial$, i.e., it is a simple differential ring.
\end{enumerate}
We say that $R$ is a {\em Picard--Vessiot ring} of $k$ for $[A]$.
It is an integral domain and its ring of constants $R^\partial$ coincides with $C$. Its quotient field $K={\rm Frac}(R)$ is generated (as a field) by the entries of $U$ and its subfield of constant is again $C$. We call $K$ a {\em Picard--Vessiot extension} of $k$ for $[A]$.

\begin{Remark}\label{rmk:triviality} We are going to use several properties of Picard--Vessiot rings and extensions, namely:
\begin{enumerate}\itemsep=0pt
\item If $[A]$ and $[B]$ are two equivalent systems over $k$ then any Picard--Vessiot extension of $k$ for $[A]$ is a Picard--Vessiot extension of~$k$ for~$[B]$. Hence one can define a Picard--Vessiot extension of $k$ for a given differential module $\cM =(M,\na)$ as a Picard--Vessiot extension of~$k$ for the differential system associated to $\cM$ with respect to a basis of~$M$.
\item Let $K$ be a Picard--Vessiot extension of $k$ for differential module $\cM =(M,\na)$. The Leibnitz rule allows to endow $M\otimes_k K$ with a natural structure of differential module over~$K$, which will be denoted by $\cM\otimes_k K$. The definition of~$K$ implies that $\cM\otimes_k K$ is trivial, i.e., $\cM\otimes_k K$ admits a basis over~$K$ of horizontal elements. One can show that if a module is trivial, then all its algebraic constructions and their subquotients are trivial. See \cite[Exercice~2.12,5]{vdPutSingerDifferential}.
\item Let $V:=(\cM\otimes_k K)^\nabla$ be the $C$-vector space of the horizontal elements of $\cM\otimes_k K$. As already pointed out, it has dimension $n$.
\end{enumerate}
\end{Remark}

We give now a definition that we will use in the main theorem.

\begin{Definition}\label{defn:(semi-)invariants}
Let $\cM=(M,\nabla)$ be a differential module and $K$ a Picard--Vessiot extension. A {\em semi-invariant} of $\cM$ is a horizontal element $m\otimes g$ contained in
some construction of the form $\Constr(\cM\otimes_k K)\cong (\Constr(M)\otimes_k K,\na)$,
i.e., an element $m\otimes g$ with $m \in\Constr(M)$ and $g \in K$ such that $\nabla(m\otimes g)=0$.
If $m$ is a horizontal element in some construction $\Constr(M)$, then it is called an {\em invariant} of $\cM$.
\end{Definition}

For the convenience of the reader, we reprove the following classical lemma that we will use
in this work.

\begin{Lemma}\label{lemma:semi-invariants}
Let $\cM=(M,\nabla)$ be a differential module and $K$ a {\em Picard--Vessiot extension} of~$k$ for~$\cM$. Fix a basis~$\ul e$ of~$\cM$ and let $[A]\colon \partial \vec y=A \vec y$ be the associated linear differential system. The following statements are equivalent:
\begin{enumerate}\itemsep=0pt
\item[$1)$] there exists $m\in M$ such that $\nabla(m)= f m$, for some $ f \in k$ $($i.e., $m$ generates a $\nabla$-stable line$)$;
\item[$2)$] there exists a solution $g \vec v$ over $K$ of $\partial\vec y= A \vec y$,
with $g\in K$, such that $\partial(g)/g=-f \in k$, and $\vec v\in k^n$.
\end{enumerate}
\end{Lemma}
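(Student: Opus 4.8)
The plan is to unwind both statements into concrete linear algebra over $K$ and check they say the same thing. Let me set up notation: fix the basis $\ul e$ of $M$, so $\na \ul e = -\ul e A$, and recall that an element $\ul e \vec y$ of $M$ is horizontal iff $\partial \vec y = A\vec y$. More generally, for $g \in K$ and $\vec v \in k^n$, consider the element $m \otimes g$ of $\cM \otimes_k K$ where $m = \ul e \vec v$; a direct computation with the Leibniz rule gives $\na(m\otimes g) = \ul e\big(g\,\partial(\vec v) - g A\vec v\big) + \partial(g)\,\ul e\vec v = \ul e\big(\partial(g)\vec v + g(\partial \vec v - A\vec v)\big)$.

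First I would prove $1) \Rightarrow 2)$. Suppose $m \in M$ with $\na(m) = fm$, $f \in k$. Write $m = \ul e\vec v$ with $\vec v \in k^n$; then $\na(m) = \ul e(\partial \vec v - A\vec v) = f\,\ul e\vec v$, so $\partial \vec v - A\vec v = f\vec v$, i.e.\ $\partial \vec v = (A + fI_n)\vec v$. Now choose $g \in K$ with $\partial(g)/g = -f$: such a $g$ exists in the Picard--Vessiot extension because $K$ contains a full solution matrix of $[A]$ and hence solves every scalar equation $\partial y = -fy$ with $f \in k$ (equivalently, one can enlarge to a Picard--Vessiot extension for the rank-one module and then embed; or simply invoke that $\partial y = -fy$ has a nonzero solution in any Picard--Vessiot extension containing the solutions of a system in which this rank-one module appears as a construction — cleanest is to note the $\na$-stable line generates a rank-one submodule, whose solution $g$ lies in $K$). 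Then with $\vec w := g\vec v \in K^n$ we get $\partial(\vec w) = \partial(g)\vec v + g\partial(\vec v) = -fg\vec v + g(A+fI_n)\vec v = gA\vec v = A\vec w$, so $g\vec v$ is a solution of $[A]$ over $K$ of the required shape.

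Next, $2) \Rightarrow 1)$. Given $g \in K$, $\vec v \in k^n$ with $\partial(g\vec v) = A(g\vec v)$ and $\partial(g)/g = -f \in k$: expand $\partial(g)\vec v + g\partial(\vec v) = gA\vec v$, divide by $g$ to get $-f\vec v + \partial(\vec v) = A\vec v$, hence $\partial(\vec v) - A\vec v = -f\vec v$. Therefore for $m := \ul e\vec v \in M$ we have $\na(m) = \ul e(\partial\vec v - A\vec v) = \ul e(-f\vec v) = -fm$, so $m$ generates a $\na$-stable line (with eigenvalue $-f$ rather than $f$, which is immaterial — one may replace $f$ by $-f$ in the statement's normalization, consistently with how $g$ is chosen). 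I would also note $m \neq 0$ since $g\vec v \neq 0$ forces $\vec v \neq 0$.

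The only genuinely non-routine point is the existence, inside the \emph{given} Picard--Vessiot extension $K$ of $k$ for $\cM$, of an element $g$ with logarithmic derivative $-f$; everything else is bookkeeping with the Leibniz rule. I would handle this by observing that the $\na$-stable line $km$ is a rank-one differential submodule $\cL \subseteq \cM$, and a construction (in fact a subquotient) of $\cM$, so by Remark~\ref{rmk:triviality}(2) the module $\cL \otimes_k K$ is trivial, i.e.\ admits a horizontal basis over $K$; a horizontal generator of $\cL\otimes_k K$ is exactly an element $g^{-1}\otimes m$ (up to scaling) with $\partial(g^{-1})/g^{-1} = f$, equivalently $\partial(g)/g = -f$, giving the desired $g \in K$. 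In the direction $2)\Rightarrow 1)$ no such existence issue arises.
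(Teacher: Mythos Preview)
Your proof is correct and follows essentially the same approach as the paper: both directions are the same Leibniz-rule bookkeeping, and the one nontrivial point --- the existence of $g\in K$ with $\partial(g)/g=-f$ --- is handled exactly as in the paper, via the triviality of $\cL\otimes_k K$ from Remark~\ref{rmk:triviality}. One minor arithmetic slip in $(2)\Rightarrow(1)$: from $-f\vec v+\partial\vec v=A\vec v$ you should get $\partial\vec v-A\vec v=f\vec v$, hence $\nabla(m)=fm$ with the \emph{same} $f$ as in statement~(2); there is no sign discrepancy to explain away.
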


\begin{proof}Let us assume that there exists $m\in M$ such that $\nabla(m)= f m$, for some $f\in k$. Then the line $L$ generated by $m$ over $k$ is a differential module $\cL$, and $\cL\otimes_k K \subset \cM\otimes_k K $ is a trivial differential module (see Remark~\ref{rmk:triviality}). Thus there exists $g\in K$, $g\neq 0$, such that $m\otimes g$ is a~horizontal element of $\cL\otimes_k K$. It follows that we have
\begin{gather*}
0=\nabla(m\otimes g)=f m\otimes g+m\otimes \partial(g) = m\otimes ( fg+ \partial(g) ),
\end{gather*}
which implies $\partial(g)/g= -f \in k$. If we define $\vec v\in k^n$ by the relation $m= {\ul e}\vec v$, then we can check that $g\vec v$ is a solution vector of $\partial\vec y= A \vec y$.

Let us now suppose that we are in the situation described in the second assertion. If we define $m:={\ul e}\vec v$, then we have $\nabla(m)={\ul e}(\partial \vec v-A \vec v)$. Moreover, we have $\partial(g\vec v)=\partial(g) \vec v + g \partial \vec v=A g \vec v$ so that $\partial \vec v-A \vec v = -(\partial(g)/g) \vec v=f \vec v$. Finally $\nabla(m)=f m$, with $f \in k$ which ends the proof.
\end{proof}

\begin{Remark}\label{rmk:semi-invariants}
In particular, Lemma~\ref{lemma:semi-invariants} above implies that, if $m\otimes g$ is a semi-invariant,
then necessarily $\nabla(m)=(-\partial(g)/g)m$, or equivalently $m$ generates a $\nabla$-stable line $L$ contained in a~construction~$\Constr(M)$ of~$M$. If $m^\p=hm$, for some $h\in k$, then $\nabla(m^\p)=(-\partial(g)/g+\partial(h)/h)m^\p$ and
$\nabla(m^\p\otimes (g/h))=0$. Moreover, if $m\otimes g$ is a semi-invariant of $\cM$, such that $m$ generates a line $L$ in a~construction of~$M$, and $c\in C$ is a~nonzero constant, then $c(m\otimes g)$ is another semi-invariant, corresponding to the same line $L$. One can prove that all the semi-invariants can be obtained in this way.
Roughly speaking, semi-invariants of $\cM$ correspond to exponential solutions of $\partial\vec y= \constr(A) \vec y$ and invariants correspond to rational solutions.
For more details on these definitions, see \cite[Section~3.4]{ACW}.
\end{Remark}

For further reference we recall the following lemma:

\begin{Lemma}[{\cite[Lemma 29]{ACW}}]\label{lemma:invariants}
Let $[A]\colon \partial \vec y=A\vec y$ be the linear differential system associated to $\cM$
with respect to a fixed basis $\ul e$. We suppose that there exist
an algebraic extension $k^\p/k$ and a matrix $P\in\GL_n(k^\p)$ such that, for
any invariant of $\cM$ given by a horizontal element in some construction $\Constr(\cM)$ of coordinates $\vec v$ with respect to the basis $\Constr(\ul e)$, the vector
$\Constr(P)^{-1}\vec v$ has constant coordinates.
Then the same property holds for any invariant of $\cM\otimes_k \ol k$,
where $\ol k$ is the algebraic closure of $k$.
\end{Lemma}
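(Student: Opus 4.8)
The plan is to reduce to a \emph{finite} Galois extension of $k$ and then exploit that the constant field $C$ is algebraically closed. An invariant of $\cM\otimes_k\ol k$ in a construction $\Constr$ is a horizontal element $w$ of $\Constr(\cM)\otimes_k\ol k=\Constr(\cM\otimes_k\ol k)$, and its coordinate vector $\vec v$ in the canonical basis $\Constr(\ul e)$ has finitely many entries, all in $\ol k$, hence in a finite subextension of $\ol k/k$. Since the entries of $P$ also generate a finite subextension and ${\rm char}\,k=0$, I choose a finite Galois extension $L/k$ containing all of these; then $P\in\GL_n(L)$ and $w$ is an invariant of $\cM\otimes_k L$. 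So it suffices to prove: for every such $L$, every invariant of $\cM\otimes_k L$, in every construction, has constant coordinates in the basis $\Constr(\ul f)$, where $\ul f=\ul e P$ --- equivalently, $\Constr(P)^{-1}\vec v$ is constant. The mechanism is that $G:={\rm Gal}(L/k)$ acts on $\Constr(\cM)\otimes_k L$ through its action on the scalar field $L$ and commutes with $\na$, because it commutes with the unique extension of $\partial$ to $L$; consequently $G$ permutes horizontal elements, and the $G$-fixed elements of $\Constr(\cM)\otimes_k L$ are precisely those of $\Constr(\cM)$.

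The core step is to symmetrize over a Galois orbit. Let $w$ be an invariant of $\cM\otimes_k L$ in a construction $\Constr$. Each $\gamma(w)$, $\gamma\in G$, is again horizontal, and for $p=0,1,\dots,|G|$ I form the $p$-th elementary symmetric function of the orbit
\[
s_p\ :=\ \sum_{\substack{S\subseteq G\\ |S|=p}}\ \bigotimes_{\gamma\in S}\gamma(w)\ \ \in\ \ {\rm Sym}^p\big(\Constr(\cM\otimes_k L)\big)=\big({\rm Sym}^p\circ\Constr\big)(\cM\otimes_k L),
\]
the product being taken in the symmetric algebra. Each $s_p$ is horizontal (sums and tensor products of horizontal elements are horizontal) and $G$-invariant (the Galois action merely permutes the $p$-subsets $S$), hence lies in $({\rm Sym}^p\circ\Constr)(\cM)$ and is an invariant \emph{of $\cM$ over $k$} in the construction ${\rm Sym}^p\circ\Constr$. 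The hypothesis of the lemma thus applies to every $s_p$: its coordinates in the basis $({\rm Sym}^p\circ\Constr)(\ul f)={\rm Sym}^p(\Constr(\ul f))$ are constant.

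It remains to descend from the $s_p$ to $w$, and here algebraic closedness of $C$ is essential. Write $\Constr(\ul f)=(\phi_1,\dots,\phi_N)$ and let $\vec u_\gamma\in L^N$ be the coordinates of $\gamma(w)$ in this basis, so that $\vec u_{{\rm id}}=\Constr(P)^{-1}\vec v$ is exactly what we wish to prove constant. Expanding $\gamma(w)=\sum_j u_{\gamma,j}\phi_j$, the coefficient of the pure monomial $\phi_j^{\,p}$ in $s_p$ equals the scalar elementary symmetric polynomial $e_p\big((u_{\gamma,j})_{\gamma\in G}\big)$; by the previous step this coefficient lies in $C$, for every index $j$ and every $p\le|G|$. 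Therefore, for each fixed $j$, every $u_{\gamma,j}$ is a root of the monic degree-$|G|$ polynomial $\prod_{\gamma\in G}\big(T-u_{\gamma,j}\big)$, whose coefficients are, up to sign, the $e_p\big((u_{\gamma,j})_{\gamma\in G}\big)\in C$; since $C$ is algebraically closed, $u_{\gamma,j}\in C$ for all $\gamma,j$. Taking $\gamma={\rm id}$ gives $\Constr(P)^{-1}\vec v\in C^N$, as wanted. (One could equally well use the power sums $\sum_{\gamma\in G}\gamma(w)^{\otimes m}$, $1\le m\le|G|$, in place of the $s_p$, invoking Newton's identities.)

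The main obstacle I anticipate is conceptual rather than computational: because $P$ lies only over an algebraic extension of $k$, it does \emph{not} commute with the Galois action, so one cannot simply apply ${\rm Gal}(L/k)$ to the relation ``$\Constr(P)^{-1}\vec v$ is constant'' to pass from invariants of $\cM$ to invariants of $\cM\otimes_k L$. Replacing $w$ by the elementary symmetric functions of its Galois orbit --- which live in symmetric powers of the construction and \emph{do} descend to $k$ --- and then invoking algebraic closedness of $C$ is exactly what resolves this. The other points to verify are routine: the compatibility ${\rm Sym}^p(\Constr(\ul f))=({\rm Sym}^p\circ\Constr)(\ul f)$ of canonical bases, and the bookkeeping that identifies the coefficient of $\phi_j^{\,p}$ in $s_p$ with $e_p\big((u_{\gamma,j})_{\gamma\in G}\big)$.
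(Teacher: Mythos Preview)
The paper does not supply its own proof of this lemma; it is merely recalled from \cite[Lemma~29]{ACW}. Your argument is correct and follows essentially the same line as in the cited reference: reduce to a finite Galois extension $L/k$, replace the horizontal element by the elementary symmetric functions of its Galois orbit inside the symmetric-power constructions ${\rm Sym}^p\circ\Constr$ so as to obtain invariants defined over $k$, apply the hypothesis to these, and then use algebraic closedness of $C$ to conclude coordinate by coordinate via the polynomial $\prod_{\gamma\in G}(T-u_{\gamma,j})$. The two points you flag as routine --- the compatibility $({\rm Sym}^p\circ\Constr)(\ul f)={\rm Sym}^p(\Constr(\ul f))$ of canonical bases and the identification of the coefficient of $\phi_j^{\,p}$ in $s_p$ with $e_p\big((u_{\gamma,j})_{\gamma\in G}\big)$ --- are indeed so.
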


\subsection{The differential Galois group}\label{sebsec:GaloisGroup}

The differential Galois group of $[A]$ (or, equivalently, of $\cM$)
is defined as
\[
 \Gal([A]) := \Aut^\partial(K/k),
\]
where $K$ is a Picard--Vessiot extension of $k$ for $[A]$ and
\begin{gather*}
 \Aut^\partial(K/k) := \big\{\varphi\colon K\to K,\text{~field automorphism s.t.~}\forall\, f\in k ,\\
 \hphantom{\Aut^\partial(K/k) := \{}{} \text{we have~} \varphi(f)=f \text{~and~}[\varphi,\partial]=0\big\}.
\end{gather*}
The first result of the differential Galois theory is that any fundamental matrix of solutions $U\in\GL_n(R)$ of $[A]$ determines a faithful representation of $\Gal([A])$ as a \emph{linear algebraic group} defined over $C$:
\begin{align*}
 \Gal([A])& \to \GL_n(C), \\
 \varphi & \mapsto U^{-1}\varphi(U).
\end{align*}
In fact, $\varphi(U)$ is a fundamental matrix of solutions of $[A]$, therefore $U^{-1}\varphi(U)$ must be an invertible matrix with constant coefficients. The choice of another fundamental matrix of solutions leads to a conjugated representation. We will sometimes simply call $G$ the differential Galois group $\Gal([A])$, identifying it with its image via the morphism above and without mentioning the matrix $U$, unless the context makes it necessary.

We are not explaining here any result on the Galois correspondence
and we refer the interested reader to
the literature.
For the purpose of this paper, we mostly need to know that, if $k^{\circ}$ is the relative algebraic closure of~$k$ in~$K$, then $K/k^{\circ}$ is a Picard--Vessiot extension for~$[A]$
over~$k^{\circ}$. Moreover the field of constants is still~$C$ and
we have $\Aut^\partial(K/k^{\circ})=G^\circ$, where $G^\circ$ is the connected component of~$G$ containing~$1$. This means that the differential Galois group of~$[A]$ over~$k^{\circ}$ coincides with the automorphisms of~$G$ that fix~$k^{\circ}$ and can be identified with~$G^\circ$.

{\bf The Galois--Lie algebra $\boldsymbol{\liegal([A])}$.}
Since the differential Galois group $G=\Gal([A])$ is an algebraic group over $C$, one can naturally consider its Lie algebra
$\g:=\liegal([A])$ called {\em Galois--Lie algebra}, that is the
tangent space to $G^\circ$ at $1$.
If we look at its $C$-rational points, we have{\samepage
\begin{gather*}
\liegal([A])(C)=\{N\in M_n(C)\colon 1+\veps N\in \Gal([A])(C[\veps])\},
\end{gather*}
where $\Gal([A])(C[\veps])$ are the rational points of $\Gal([A])$
over the $C$-algebra $C[\veps]$, with $\veps^2=0$.}

As it is an algebraic Lie algebra over $C$, $\liegal([A])$ is
generated as a $C$-vector space by a finite subset of $M_n(C)$.
It will be useful to notice that the same subset of matrices of $M_n(C)$ generates, as a $k$-vector space,
the algebra $\liegal([A])(k)$ of $k$-rational points of $\liegal([A])$.
For further reference we recall the following result:

\begin{Proposition}[{\cite[Proposition 1.31]{vdPutSingerDifferential}}]\label{prop:half-kolchin}
Let $\mathfrak{h}$ be an algebraic Lie algebra defined over~$C$ and such that
$A\in\mathfrak{h}(k)$. Then $\liegal([A])\subset \mathfrak{h}$.
\end{Proposition}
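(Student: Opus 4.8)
The plan is to exploit the defining property of the Galois--Lie algebra together with the functoriality of Picard--Vessiot theory under the base change $C\rightsquigarrow C[\veps]$. First I would fix a Picard--Vessiot ring $R$ for $[A]$ over $k$ with fundamental matrix $U\in\GL_n(R)$, so that $\partial U=AU$ and $\Gal([A])$ is realized inside $\GL_n(C)$ via $\varphi\mapsto U^{-1}\varphi(U)$. Recall that, for the dual numbers $C[\veps]$ with $\veps^2=0$, the $C[\veps]$-points of $\Gal([A])$ are exactly the matrices $1+\veps N$ with $N\in\liegal([A])(C)$; this is just the description of the tangent space at the identity quoted above. Since $\mathfrak h$ is an algebraic Lie algebra over $C$, it is cut out inside $M_n$ by a family of polynomial equations, and again $N\in\mathfrak h(C)$ if and only if $1+\veps N$ lies in the corresponding algebraic group $H$ (the unique connected algebraic subgroup of $\GL_n$ with Lie algebra $\mathfrak h$) over $C[\veps]$. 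Hence the containment $\liegal([A])\subset\mathfrak h$ is equivalent to: every $C[\veps]$-point of $\Gal([A])$ is a $C[\veps]$-point of $H$.

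The core of the argument is therefore to show that the hypothesis $A\in\mathfrak h(k)$ forces $\Gal([A])(C[\veps])\subseteq H(C[\veps])$. For this I would argue as follows. Let $\sigma\in\Gal([A])(C[\veps])$, i.e.\ a differential $k$-automorphism of $R\otimes_C C[\veps]$ (equivalently, of the Picard--Vessiot ring of $[A]$ base-changed to $C[\veps]$, which is still a simple differential ring with constants $C[\veps]$), and write $\sigma(U)=U M_\sigma$ with $M_\sigma=1+\veps N_\sigma\in\GL_n(C[\veps])$. The task is to show $N_\sigma\in\mathfrak h(C)$. I would invoke Tannakian/representation-theoretic bookkeeping: since $\mathfrak h$ is algebraic over $C$, $H$ is the stabilizer of a line $\ell$ in some construction $W=\Constr(C^n)$ (Chevalley's theorem, applied over $C$); the corresponding construction $\cW=\Constr(\cM)$ is then a differential module whose matrix is $\constr(A)$, and the assumption $A\in\mathfrak h(k)$ together with the fact that $N\mapsto\constr(N)$ is a Lie-algebra morphism gives that $\constr(A)$ annihilates the line $\ell\otimes_C k$ up to a scalar in $k$, i.e.\ $\ell$ spans a $\nabla$-stable line in $\Constr(M)$. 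Such a $\nabla$-stable line is preserved (as a line) by every element of the differential Galois group — in particular its $C[\veps]$-points — because Galois automorphisms commute with $\nabla$ and fix $k$; hence $M_\sigma\in H(C[\veps])$, which is exactly $N_\sigma\in\mathfrak h(C)$. Running this over all $\sigma$ yields $\liegal([A])(C)\subseteq\mathfrak h(C)$, and since both are the $C$-points of algebraic Lie algebras over the algebraically closed field $C$, this is the desired inclusion $\liegal([A])\subset\mathfrak h$.

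An alternative, perhaps cleaner, route avoids Chevalley: one directly constructs a Picard--Vessiot ring for $[A]$ over $k[\veps]:=k\otimes_C C[\veps]$ (a differential ring over the differential field $k$ with square-zero nilpotent), observes that $\Gal([A])(C[\veps])$ acts on the horizontal vectors of $\cM\otimes_k(K\otimes_C C[\veps])$, and checks by a direct computation — using $\partial(1+\veps N_\sigma)=0$ and $\partial U=AU$ — that the condition ``$\sigma$ is a differential automorphism'' translates into $A(1+\veps N_\sigma)=(1+\veps N_\sigma)A+\partial(1+\veps N_\sigma)\cdot(1+\veps N_\sigma)^{-1}\cdot(\text{stuff})$; comparing $\veps$-coefficients shows $[N_\sigma,A]$ is a $k$-linear combination consistent with membership in $\mathfrak h(k)$, whence $N_\sigma\in\mathfrak h(C)$ once one knows $\mathfrak h$ is defined over $C$. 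In either presentation, the one delicate point — the step I expect to be the main obstacle — is the passage from $A\in\mathfrak h(k)$ (a statement about $k$-rational points) to a constraint on the $C[\veps]$-points of the Galois group: one must be careful that ``algebraic Lie algebra defined over $C$'' is used essentially, so that $\mathfrak h(k)=\mathfrak h(C)\otimes_C k$ and the defining equations of $\mathfrak h$ have coefficients in $C$, allowing the descent from $k$ back to $C$. Everything else is routine functoriality of the Picard--Vessiot construction and the standard identification of the Lie algebra with the tangent space at the identity.
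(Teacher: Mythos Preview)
The paper does not prove Proposition~\ref{prop:half-kolchin}: it is merely \emph{recalled} from \cite[Proposition~1.31]{vdPutSingerDifferential} ``for further reference'', with no argument given. So there is no proof in the paper to compare yours against. I can, however, assess your proposal on its own terms and against the standard proof in \cite{vdPutSingerDifferential}.

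Your Chevalley route has a genuine gap at the step ``hence $M_\sigma\in H(C[\veps])$''. You correctly observe that $A\in\mathfrak h(k)$ makes the $k$-line $L:=\Constr(\ul e)(\ell\otimes_C k)$ inside $\Constr(M)$ $\nabla$-stable, and that the Galois group fixes $L$ (it fixes everything defined over $k$). But $H$ is the stabiliser of $\ell$ in the representation of $\GL_n$ on $\Constr(C^n)$, whereas the matrix $M_\sigma$ records the action of $\sigma$ on the \emph{solution space} $\Constr(V)$, identified with $\Constr(C^n)$ via the chosen fundamental matrix $\Constr(U)$. The $G$-stable $C$-line you actually produce is $L_C:=(L\otimes_k K)^\nabla$, which under this identification becomes $\ell':=C\cdot\Constr(U)^{-1}g\,\vec v$ (with $\vec v$ spanning $\ell$ and $\partial g/g=f$). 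There is no reason, a~priori, for $\ell'$ to equal $\ell$: that would require $\Constr(U)$ to send $\ell$ into $K\cdot\ell$, i.e., essentially $U\in H(R)$ up to scalars. That is exactly the nontrivial content of the proposition, so the argument is circular as written. (Under Assumption~\ref{assumption:seidenberg} one can force $\ell'=\ell$ by taking $U=\what U_{x_0}$ with $\what U_{x_0}(x_0)={\rm Id}$ and evaluating at $x_0$, but Proposition~\ref{prop:half-kolchin} is stated for an arbitrary differential field.)

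Your ``alternative route'' does not work either: the equation $\partial(UM_\sigma)=AUM_\sigma$ holds for \emph{every} constant $M_\sigma$, so comparing $\veps$-coefficients yields no constraint linking $N_\sigma$ to $\mathfrak h$. What singles out the Galois group inside $\GL_n(C)$ is that $U\mapsto UM_\sigma$ must extend to a ring automorphism of the Picard--Vessiot ring $R$, and that constraint is invisible in your computation. The standard proof in \cite{vdPutSingerDifferential} supplies precisely the missing ingredient: one shows that $A\in\mathfrak h(k)$ makes the ideal $I_H\otimes_C k$ of $H$ in $k[\GL_n]$ a \emph{differential} ideal, so one may take a maximal differential ideal containing it and obtain a Picard--Vessiot ring in which the fundamental matrix $U$ lies in $H(R)$; then $M_\sigma=U^{-1}\sigma(U)\in H(R)\cap\GL_n(C)=H(C)$, giving $G\subset H$ and hence $\liegal([A])\subset\mathfrak h$.
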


\section{Reduced forms}\label{section3}

\subsection{Characterization of reduced forms}

We keep the notation of the previous section.\label{section3.1}

\begin{Definition}We say that a linear differential system $[A]\colon \partial \vec y=A\vec y$ is in \emph{reduced form} when~$A$ belongs to $\liegal([A])(k)$. Let $\cM=(M,\na)$ be a differential module and $\ul e$ be a $k$-basis of $M$. We say that $\ul e$ is a~\emph{reduced basis} if the system associated to $\cM$ with respect to the basis $\ul e$ is in reduced form.
\end{Definition}

As pointed out in the introduction, Kolchin and Kovacic has proved that a reduced form always exists on a finite extension of~$k$. See \cite[Proposition~1.31]{vdPutSingerDifferential}. Criteria for reduced forms have been studied by Aparicio, Compoint and Weil in~\cite{ACW}. Their main result is generalized below. Our contribution in this theorem is the new characterization of reduced form stated in item~(3).

\begin{Theorem}\label{thm:main} Let $[A]\colon \partial \vec y=A\vec y$ be the linear differential system associated to a differential module $\cM=(M,\na)$ over~$k$, with respect to a fixed basis $\ul e$.
The following assertions are equivalent:
\begin{enumerate}\itemsep=0pt
 \item[$1)$] $[A]$ is in reduced form;
 \item[$2)$] for any construction $\Constr(\cM)$ of $\cM$, every $\nabla$-stable line
 of $\Constr(M)$ admits a constant basis $($i.e., a basis whose elements have constant coordinates with respect to the basis induced by~$\ul e)$;
 \item[$3)$] for any construction $\Constr(\cM)$ of~$\cM$, every $\nabla$-stable sub-$k$-vector space of $\Constr(M)$ admits a constant basis.
\end{enumerate}
If moreover $\cM$ is completely reducible $($i.e., it is direct sum of irreducibles$)$, then the assertions above are equivalent to
\begin{enumerate}\itemsep=0pt
\item[$4)$] any invariant of $\cM$ has constant coordinates.
\end{enumerate}
\end{Theorem}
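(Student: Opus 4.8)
The plan is to prove the cycle of implications $1)\Rightarrow 3)\Rightarrow 2)\Rightarrow 1)$, and then add $2)\Leftrightarrow 4)$ under complete reducibility. The implication $3)\Rightarrow 2)$ is trivial, since a $\nabla$-stable line is a particular $\nabla$-stable subspace. The implication $2)\Rightarrow 1)$ is precisely the content of the main result of \cite{ACW}, so there is nothing new to do there (it follows because the Galois--Lie algebra is, by the Tannakian correspondence, the stabilizer of the collection of all $\nabla$-stable lines in all constructions, and having a constant basis for each such line means $A$ annihilates the corresponding semi-invariant data, i.e.\ $A$ lies in the common stabilizer, which is $\liegal([A])(k)$; see \cite[Proposition~1.31]{vdPutSingerDifferential} and the discussion in Section~\ref{section2}).

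The new implication is $1)\Rightarrow 3)$. First I would recall that, by Chevalley's theorem in the Tannakian setting, the differential Galois group $G$ is characterized among closed subgroups of $\GL(V)$ by which lines it stabilizes in which constructions; dually, its Lie algebra $\g=\liegal([A])$ is characterized by the subspaces it leaves invariant. Concretely, extend scalars to a Picard--Vessiot extension $K$ and let $V=(\cM\otimes_k K)^\nabla$ be the $n$-dimensional $C$-space of horizontal vectors, with its constant basis $\vec b$ (the columns of a fundamental matrix $U$). A $\nabla$-stable $k$-subspace $W\subset\Constr(M)$ corresponds, after tensoring with $K$, to a $G$-stable $C$-subspace $\what W\subset\Constr(V)$; the point is that the coordinate description of $W$ in the canonical basis $\Constr(\ul e)$ is obtained from a $C$-basis of $\what W$ by applying $\Constr(U)$. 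Since $[A]$ is in reduced form, $A\in\g(k)$, hence $\constr(A)\in\constr(\g)(k)$, and $\constr(\g)$ stabilizes $\what W$; I would then argue that $W$ itself is already defined over $C$ inside $\Constr(M)$ — equivalently that $\Constr(U)^{-1}$ carries a $k$-basis of $W$ to constant vectors — by a descent/dimension argument: the assignment sending a horizontal construction to its $G$-stable subspaces is compatible with the $\nabla$-structure, and the reduced-form hypothesis forces the two $C$-structures (the one from $V$ and the one from $M$) to coincide on every $\nabla$-stable subspace of every construction.

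The main technical obstacle is making this descent argument precise without circularity: one must show that "$A\in\g(k)$" genuinely implies that each individual $\nabla$-stable subspace has a constant basis, not merely that the whole family is stabilized. The cleanest route is probably to reduce item~$3)$ to item~$2)$ directly at the level of differential modules: given a $\nabla$-stable subspace $W\subset\Constr(M)$, take a suitable exterior power $\wedge^d W\subset\wedge^d\Constr(M)$ (with $d=\dim_k W$), which is a $\nabla$-stable line in a new construction $\Constr'(\cM)$; apply the already-available statement for lines to get a constant generator $m_1\wedge\cdots\wedge m_d$ there, and then descend to a constant basis of $W$ itself — this last descent using that, in a reduced form, the ambient module $\Constr(M)\otimes K$ is trivialized by $U$ with $U$ built from constants relative to $\g$. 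Finally, for the equivalence with $4)$ under complete reducibility, I would invoke Lemma~\ref{lemma:invariants} together with the Tannakian fact that for a reductive $G$ every construction is completely reducible, so every $\nabla$-stable subspace is a direct summand generated by invariants; conversely, if all invariants of $\cM$ have constant coordinates, then $\constr(A)$ kills every rational solution of every $\partial\vec y=\constr(A)\vec y$, which for reductive $G$ suffices to place $A$ in $\g(k)$ and hence (by $1)\Rightarrow 3)$) to recover $2)$. I expect the reductive case to be comparatively short once the Tannakian semisimplicity of constructions is quoted; the bulk of the work is the $1)\Rightarrow 3)$ descent.
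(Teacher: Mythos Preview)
Your overall structure is close to the paper's, but there is one clean idea you are missing and it is exactly the step you flag as the ``main technical obstacle''. The paper does \emph{not} prove $1)\Rightarrow 3)$; it proves $2)\Rightarrow 3)$ directly, using only linear algebra over $C$ and never invoking the hypothesis $A\in\g(k)$ a second time. Given a $\nabla$-stable $W\subset N=\Constr(M)$ of dimension~$d$, the paper also passes to the line $\wedge^d W\subset\wedge^d N$ and uses $2)$ to get a constant generator $\vec w$. The key point is then purely Pl\"ucker-type linear algebra: the map $\Psi\colon N\to\wedge^{d+1}N$, $\vec v\mapsto \vec w\wedge \vec v$, has matrix with entries in $C$ in the basis $\Constr(\ul e)$, so $\ker\Psi$ admits a basis over $C$; and $\ker\Psi=W$. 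That is the entire argument.

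Your ``cleanest route'' paragraph gets as far as the constant generator of $\wedge^d W$, but then you write it as $m_1\wedge\cdots\wedge m_d$ and propose to ``descend to a constant basis of $W$'' using that in reduced form $\Constr(M)\otimes K$ is trivialised by $U$ ``built from constants relative to $\g$''. This is where the gap lies: a constant decomposable element of $\wedge^d N$ does not automatically yield constant $m_i$, and the appeal to $U$ and $\g$ is both vague and unnecessary. Replacing that step by the kernel-of-wedge argument above removes the obstacle entirely and shows that the implication is really $2)\Rightarrow 3)$, independent of reduced form. For $4)$, the paper simply cites \cite[Proposition~27]{ACW} for $1)\Leftrightarrow 4)$, which is in line with what you sketch.
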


\begin{proof}Lemma \ref{lemma:semi-invariants} and Remark~\ref{rmk:semi-invariants} show that the
constant bases of $\nabla$-stable lines correspond to the semi-invariants considered in~\cite{ACW}.
Therefore ``$(1)\Leftrightarrow (2)$'' is a reformulation of
\cite[Theorem~1]{ACW}. Since ``$(3)\Rightarrow (2)$'' is tautological, it is enough to prove that ``$(2)\Rightarrow (3)$''. Let $\cN=(N,\nabla)$ be a~construction of $\cM$ and $W$ be a~$\nabla$-stable sub-$k$-vector space of~$N$ of dimension~$d$. It follows that $\wedge^d W$ is $\nabla$-stable sub-$k$-vector space of dimension~$1$ of $\wedge^d N$. By assumption, there exists a~non-zero element of~$\wedge^d W$, whose coordinates $\vec w$ with respect to the basis induced by $\ul e$ on $\wedge^d N$ are in $C$.
Hence, in the basis induced by~$\ul e$, the map
\begin{align*}
\Psi\colon \ N&\to \wedge^{d+1} N,\\
 \vec v&\mapsto \vec w\wedge\vec v,
\end{align*}
is represented by a matrix with coefficients in $C$. It follows that $\ker \Psi$ has a basis of vectors with coordinates in $C$, with respect to the basis induced by $\ul e$. Since $\ker \Psi=W$, we have proved ``$(2)\Rightarrow (3)$''.

By definition, a horizontal element of a construction $\Constr(\cM)$ corresponds to a solution of
the associated differential system $\partial\vec y=\constr(A)\vec y$ with respect to
the basis $\Constr(\ul e)$.
Definition~\ref{defn:(semi-)invariants} and \cite[Exercice~2.38]{vdPutSingerDifferential} imply that the equivalence ``$(1)\Leftrightarrow(4)$'' coincides with
\cite[Proposition~27]{ACW} which ends the proof of Theorem~\ref{thm:main}.
\end{proof}

\begin{Remark}\label{rmk:constant-invariants}
Let $m\in\Constr(M)$ generate a line which is invariant by $\nabla$ and let $\ul e$ be a~reduced basis.
By Theorem~\ref{thm:main} above, we can choose $m$ such that $m=\Constr(\ul e)\vec v$, with $\vec v\in C^n$.
Moreover since $\nabla(m)=fm$ for some $f\in k$,
there exists $g\in K$ verifying $\partial(g)=fg$.
This means that $g\vec v$ is a solution vector of the system associated to $\Constr(\cM)$ with respect to
$\Constr(\ul e)$. This is what we mean when we say that invariants and semi-invariants have constant coordinates with respect to this reduced basis.
\end{Remark}

\begin{Example}\label{rmk:eigenring}Theorem~\ref{thm:main} has as corollary the following known fact:
in a reduced basis, the eigenring of $\cM$ (see Example~\ref{exa:EndM} for the definition) can be identified to the ring of matrices with coefficients in~$C$ which commute with~$A$, hence with all the elements of a Wei--Norman decomposition of~$A$ (see \cite[Section~2.2]{ACW} and~\cite{Wei-Norman}).
\end{Example}

The Kolchin--Kovacic theorem implies in particular that there exists a finite extension $k^\p/k$ such that
$\cM\otimes_k k^\p$ admits a reduced basis. See \cite[Proposition~1.31 and Corollary~1.32]{vdPutSingerDifferential} and \cite[Remark~31]{ACW}.

\begin{Definition}\label{reduction-field}
We say that $k^\p$ is a \emph{reduction field} when $\cM\otimes_k k^\p$ admits a reduced basis.
\end{Definition}

\begin{Remark}\label{rmk:kzero}
When $k^\p$ is a reduction field, then the differential Galois group of $\cM\otimes_k k^\p$ is connected, see \cite[Lemma~32]{ACW}, and
the Galois correspondence implies that $k^\p\cap K=k^\circ$, the fixed field of $G^{\circ}$ in the Picard--Vessiot extension $K$.
\end{Remark}

\subsection{Gauge transformation to a reduced form with local conditions}\label{subsec:main-explicit}

\begin{Assumption}\label{assumption:seidenberg}
In this section, we suppose that the field $k$ is a subfield of the field of meromorphic functions over a region $D$ of $\C$ in the variable $x$ such that
$x\in k$ and $\partial=\frac{{\rm d}}{{\rm d}x}$.
\end{Assumption}

\begin{Remark}
The field of rational functions $k=\C(x)$ satisfies the assumption above, as well as most differential fields occurring in the concrete examples.
Indeed let $k$ be any differential field.
Then the entries of the matrix $A$ of the linear differential system $[A]\colon \partial \vec y=A\vec y$, generate a differential field $\wtilde k$,
which is a finitely generated differential extension of $\Q$.
Seidenberg's embedding theorem (see \cite{seidprimel, seiden}) ensures that $\wtilde k$ can be embedded isomorphically in a differential field of meromorphic functions on an open region~$D$ of~$\C$.
\end{Remark}

We consider a linear differential system $[A]\colon \partial \vec y=A\vec y$, with coefficients in $k$.
For all points $x_0\in D$ such that $x_0$ is not a pole of $A$,
the system $[A]$ has a fundamental matrix of solu\-tions~$\what U_{x_0}(x)$ with the following properties:
\begin{enumerate}\itemsep=0pt
\item[1)] $\what U_{x_0}(x)\in\GL_n(\C[[x-x_0]])$,
\item[2)] $\what U_{x_0}(x_0)={\rm Id}$, the identity matrix.
\end{enumerate}
The columns of $\what U_{x_0}(x)$ generate over $\C$ the vector space $V$ of solutions of $[A]$ (contained in $\C[[x-x_0]]^n$),
and $K:=k(\what U_{x_0}(x))$ is a Picard--Vessiot extension of $[A]$.
Let $\Constr$ be a construction. Then $\Constr\big(\what U_{x_0}\big)(x_0)$ is the identity matrix (because $\Constr$ acts on matrices as a~group morphism). Furthermore, $\Constr\big(\what U_{x_0}\big)$ is a~fundamental matrix of solutions for~$[\constr(A)]$.
Consider an invariant of $\cM$ given by a horizontal element~$m$ in
$\Constr(\cM)$ such that~$m$ has coordinates
$\vec{v}(x)\in k^N$ with respect to the basis $\Constr(\ul e)$. The vector $\vec v(x)$ is a solution of $[\constr(A)]$ and hence we have $\vec v(x)=\Constr\big(\what U_{x_0}\big)(x)\vec w$ for some $\vec w$ with constant coefficients in~$\C$. The fact that $\Constr\big(\what U_{x_0}\big)(x_0)$ is the identity matrix
allows to conclude that $\vec w=\vec v(x_0)$.

In the theorem below, we give an algebraic characterization for a reduction matrix. The original Kolchin--Kovacic theorem ensures that, when the base field $k$ is a~$C^1$-field, there exists a~reduced basis defined on the relative algebraic closure~$k^\circ$ of~$k$ in the Picard--Vessiot extension~$K$. However, its proof is not effective and it relies on finding rational points on varieties and this part is not either, to our knowledge, algorithmic yet. In order to gain effectiveness, we enlarge~$k$ to an algebraic extension~$k^\p$, in which we can compute (without $C^1$-assumptions on $k$) a reduction matrix that is characterized by the property of transforming any invariant in its ``value at~$x_0$''. The field $k^\p$ may depend on the choice of~$x_0$. However, as seen in Remark~\ref{rmk:kzero}, we have $k^\p\cap K=k^\circ$ so $k^\p\cap K$ does not depend on the choice of~$x_0$.

The spirit of this result and of our proof appears in the proof of \cite[Theorem~3]{ACW} in a particular case. It is extended here to all constructions and this is useful for reduced form algorithms; see \cite[Lemma~5.1]{barkatou2016computing}, where this result was alluded to, without a full proof for lack of space.

\begin{Theorem}\label{thm:main-explicit} Let us consider a linear differential system $\partial\vec y=A\vec y$, defined over a field~$k$, associated to a completely reducible differential module $\cM$ with respect to a fixed basis $\ul e$. We choose a point $x_0\in D$ such that $A$ does not have a pole at $x_0$. Then, there exists a finite extension $k^\p$ of $k$ and a~matrix $P_{x_0}\in\GL_n(k^\p)$ such that $\ul{f}:=\ul{e} P_{x_0}$ is a reduced basis of $\cM\otimes_k k^\p$, having the following property:
 \begin{quote}
For any invariant of $\cM$ given by a horizontal element $m$ in some construction $\Constr(\cM)$ such that $m$ has coordinates $\vec{v}(x)\in k^N$ with respect to the basis $\Constr(\ul e)$, we have $\vec v(x)=\Constr(P_{x_0})\vec v(x_0)$.
 \end{quote}
\end{Theorem}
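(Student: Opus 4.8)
The plan is to bypass the non‑effective step of the Kolchin--Kovacic theorem by reading $P_{x_0}$ off a point of the Picard--Vessiot torsor that is defined over a finite extension of $k$, and then to extract both asserted properties from a short gauge‑transformation computation. I keep the notation of the paragraphs preceding the statement: $\what U_{x_0}\in\GL_n(\C[[x-x_0]])$ is the fundamental matrix of $[A]$ with $\what U_{x_0}(x_0)=\mathrm{Id}$ and $\partial\what U_{x_0}=A\what U_{x_0}$; $K:=k\big(\what U_{x_0}\big)$ is the resulting Picard--Vessiot extension; $R:=k\big[\what U_{x_0},\det\what U_{x_0}^{-1}\big]$ is the Picard--Vessiot ring; and $G:=\Gal([A])\subseteq\GL_n(\C)$ is the Galois group realized through $\what U_{x_0}$, with Lie algebra $\g=\liegal([A])$. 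Recall that $X:=\mathrm{Spec}(R)$ is a $G$‑torsor over $k$; in particular it is a nonempty affine $k$‑variety (the Picard--Vessiot ring is a domain), so it has a closed point, whose coordinates generate a \emph{finite} extension $k^\p$ of $k$. Unravelling the torsor structure, such a point is a matrix $P_{x_0}\in\GL_n(k^\p)$ for which $h:=\what U_{x_0}^{-1}P_{x_0}$ is a $K^\p$‑point of $G$, where $K^\p:=k^\p\big(\what U_{x_0}\big)$. I take this $P_{x_0}$ and this $k^\p$. This is the only place where a field extension is genuinely forced: $X$ has no $k$‑rational point whenever $[A]$ is not already reduced over $k$, but it always acquires one over a finite extension, with no $C^1$‑hypothesis on $k$.

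First I would check that $\ul f:=\ul e\,P_{x_0}$ is a reduced basis of $\cM\otimes_k k^\p$. Using $\partial\what U_{x_0}=A\what U_{x_0}$, a direct computation yields
\[
P_{x_0}[A]=P_{x_0}^{-1}AP_{x_0}-P_{x_0}^{-1}\partial(P_{x_0})=-\,h^{-1}\partial(h).
\]
The right‑hand side is the logarithmic derivative of a $K^\p$‑point of the algebraic group $G$, hence it lies in $\mathrm{Lie}(G)(K^\p)=\g(K^\p)$ (differentiate the equations cutting out $G$). On the other hand $P_{x_0}[A]\in M_n(k^\p)$, since $P_{x_0}\in\GL_n(k^\p)$ and $k^\p$ is $\partial$‑stable; and for the $C$‑subspace $\g(C)\subseteq M_n(C)$ one has the elementary identity $\big(\g(C)\otimes_C K^\p\big)\cap M_n(k^\p)=\g(C)\otimes_C k^\p$. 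Hence $P_{x_0}[A]\in\g(k^\p)$. Finally, $\liegal([P_{x_0}[A]])=\g$: a fundamental matrix of $[P_{x_0}[A]]$ is $P_{x_0}^{-1}\what U_{x_0}=h^{-1}\in G(K^\p)$, so $\Gal([P_{x_0}[A]])$, computed over $k^\p$, embeds into $G$ through it; its dimension is $\mathrm{trdeg}(K^\p/k^\p)=\mathrm{trdeg}(K/k)=\dim G$ because $k^\p/k$ is algebraic; and a subgroup of $G$ of dimension $\dim G$ contains $G^\circ$, hence has Lie algebra $\g$. Therefore $P_{x_0}[A]\in\liegal([P_{x_0}[A]])(k^\p)$, that is, $\ul f$ is reduced.

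Then I would verify the ``value at $x_0$'' property. Let $m$ be an invariant of $\cM$ in a construction $\Constr(\cM)$, with coordinates $\vec v(x)\in k^N$ relative to $\Constr(\ul e)$. As explained just before the statement, $\vec v(x)$ is a solution of $[\constr(A)]$, so $\vec v(x)=\Constr\big(\what U_{x_0}\big)(x)\,\vec v(x_0)$ with $\vec v(x_0)\in\C^N$. Since $m$ is fixed by $\Gal([A])$, applying a Galois automorphism to this identity (it fixes $\vec v(x)\in k^N$) shows that $\vec v(x_0)$ is a $\Gal([A])$‑invariant tensor in the corresponding construction of the solution space. In characteristic zero the stabilizer of $\vec v(x_0)$ in $\GL_n$ is a Zariski‑closed subgroup containing the dense set $G(\C)$, hence contains $G$ as a subscheme; as $h$ is a $K^\p$‑point of $G$, this gives $\Constr(h)\,\vec v(x_0)=\vec v(x_0)$. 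Using the group morphism property $\Constr(P_{x_0})=\Constr\big(\what U_{x_0}\big)\,\Constr(h)$ (valid in $\GL_N(K^\p)$ since $\what U_{x_0}h=P_{x_0}$), I conclude
\[
\Constr(P_{x_0})\,\vec v(x_0)=\Constr\big(\what U_{x_0}\big)(x)\,\Constr(h)\,\vec v(x_0)=\Constr\big(\what U_{x_0}\big)(x)\,\vec v(x_0)=\vec v(x),
\]
which is the asserted identity. The hypothesis that $\cM$ is completely reducible enters here only through Theorem~\ref{thm:main}: it is what makes this normalization tight, since for a completely reducible module invariants already detect reducedness.

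The single delicate point is the first one: recognizing that the Picard--Vessiot torsor $X$ — which by construction has no $k$‑rational point in the interesting case — is nonetheless a nonempty $k$‑variety and therefore has a point over a finite algebraic extension, and that \emph{every} such point $P_{x_0}$ simultaneously produces a reduced basis and satisfies $\vec v(x)=\Constr(P_{x_0})\vec v(x_0)$. Everything else is routine bookkeeping with gauge transformations, with the passage of the logarithmic derivative into $\mathrm{Lie}(G)$, and with the group morphism $\Constr$.
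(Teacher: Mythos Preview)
Your proof is correct and takes a genuinely different route from the paper's.

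The paper proceeds by writing down, for an indeterminate matrix $X$, the infinite system of polynomial equations $\vec v(x)=\Constr(X)\,\vec v(x_0)$ as $\Constr$ ranges over all constructions and $\vec v$ over all invariants, observes that $\what U_{x_0}$ is a common solution, and then invokes Noetherianity and the Nullstellensatz to obtain a solution $P_{x_0}\in\GL_n(k^\p)$ over a finite extension. Reducedness of the resulting basis is \emph{not} verified directly: it is deduced from the implication $(4)\Rightarrow(1)$ of Theorem~\ref{thm:main}, together with Lemma~\ref{lemma:invariants} to pass from invariants over $k$ to invariants over~$\bar k$. It is exactly at this step that complete reducibility is used.

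You instead choose $P_{x_0}$ as a closed point of the Picard--Vessiot torsor $\mathrm{Spec}(R)$ and verify both conclusions by hand: reducedness via the logarithmic-derivative identity $P_{x_0}[A]=-h^{-1}\partial h\in\g(K^\p)\cap M_n(k^\p)=\g(k^\p)$ together with a transcendence-degree count showing that the Galois--Lie algebra over $k^\p$ is still~$\g$, and the invariant identity via the $G$-invariance of $\vec v(x_0)$ and the torsor relation $h\in G(K^\p)$. Your argument never invokes Theorem~\ref{thm:main}, and in fact does not use complete reducibility at all; your closing remark that this hypothesis ``enters only through Theorem~\ref{thm:main}'' undersells your own proof, which establishes the statement for an arbitrary differential module~$\cM$. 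The paper's approach genuinely needs the hypothesis, because the variety cut out by the invariant equations can strictly contain the torsor when $G$ is not reductive, and a point of the larger variety need not give a reduced basis. What the paper's route buys is elementarity (only the Nullstellensatz) and a direct tie to the effective criterion~$(4)$ that feeds the algorithm of~\cite{barkatou2016computing}; what your route buys is conceptual clarity, a canonical source for $P_{x_0}$, and the removal of the reductivity assumption.
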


\begin{Remark}
In the last statement, we view $\Constr(\ul e)$ as a basis
of $\Constr(M\otimes_k k^\p)$ and identify an element $m\in\Constr(M)$ with its image $m\otimes 1\in\Constr(M)\otimes_k k^\p\cong \Constr(M\otimes_k k^\p)$.
\end{Remark}

\begin{proof}
In the new basis $\ul f$, if it exists, all invariants are constant therefore it is a reduced basis, thanks to Lemma~\ref{lemma:invariants} and Theorem \ref{thm:main}.
We now prove the existence.
Let $X$ be a matrix with indeterminate entries. For any invariant in some construction $\Constr(\cM)$ having coordinates $\vec{v}(x)\in k^N$ with respect to the basis $\Constr(\ul e)$, consider the equation $\vec v(x)=\Constr(X)\vec v(x_0)$. It provides an infinite set of polynomial equations over $k$ in the entries of $X$ which are all satisfied by the entries of a fundamental matrix of solutions $\what U_{x_0}(x)$ of $[A]$ (see the explanations above). As a consequence, the latter set of equations generates a proper ideal in the ring of polynomials in $n^2$ variables, which is finitely generated because of the noetherianity. The Nullstellensatz then ensures that there exists a solution $P\in\GL_n(k^\p)$, where $k^\p/k$ is a finite extension of $k$.
\end{proof}

\begin{Remark}\label{rmk:invariant-of-g}Assume that $\cM$ admits a reduced basis $\ul e$. Consider an invariant $m$ in some construction $\Constr(\cM)$ having (constant) coordinates $\vec{v}\in C^N$ with respect to the basis $\Constr(\ul e)$. Then we have $\vec{v} = \Constr\big({\what U_{x_0}}(x)\big) \vec{c}$ and the constant vector $\vec{c}$ is an invariant (in the usual sense of representation theory) of the Galois group $\Gal([A])$ in its representation induced by $\what U_{x_0}(x)$. Now, by construction, ${\what U}_{x_0}(x_0)={\rm Id}$ (the identity matrix); as $\Constr$ acts as a group morphism, we see that $\Constr\big({\what U}_{x_0}\big)(x_0)={\rm Id}$. Now, because $\vec{v}$ is constant, we have $\vec{v} = \Constr\big({\what U}_{x_0}\big)(x_0) \vec{c}$ so $\vec{v}=\vec{c}$. In a reduced basis, this observation allows to identify the invariants of~$\cM$ and those of~$\Gal([A])$.
\end{Remark}

\section{The intrinsic Galois--Lie algebra of Katz}\label{section4}

Let $\cM=(M,\nabla)$ be a differential module over $k$.

\begin{Definition}[\cite{Katzbull}]The \emph{intrinsic Galois group $\GK$ of Katz} is the set of $\varphi\in \GL(M)$ such that, for any $\nabla$-stable sub-$k$-vector space $N$ of a construction $\Constr(M)$, $N$ is set-wise stable under $\Constr(\varphi)$.

The \emph{Katz algebra} $\gK$ is the Lie algebra of $\GK$.
\end{Definition}

\begin{Remark}
For $\varphi\in \End(M)$, the functor $\mathfrak{constr}$ acts on $\varphi$ as a Lie algebra morphism, see the explanations before Example \ref{exa:EndM}. In particular, for any invariant $m\in \Constr(M)$ and $\varphi\in \gK$, we have $\mathfrak{constr}(\varphi)(m)=0$.
\end{Remark}

By Noetherianity, $\gK$ is a stabilizer of a finite family of differential modules contained in some constructions of $\cM$. This shows that $\gK$ is an \emph{algebraic} Lie algebra.
It is the central object of the famous Grothendieck--Katz conjecture on $p$-curvatures.
The following properties of $\gK$ are known, see \cite{Andregrothendieck,Katzbull,vdPutSingerDifferential}, but are reproved for self-containedness.

\begin{Lemma}\label{lemma:katz}Let $\cM=(M,\nabla)$ be a differential module over $k$, $K$ a Picard--Vessiot extension, $G$ the differential Galois group of $\cM$ and $\g$ the Galois--Lie algebra.
\begin{enumerate}\itemsep=0pt
\item[$1.$] The Lie algebra $\gK$ can be defined as the stabilizer of a single line in an algebraic construction of~$M$ $($which can be chosen to be $\nabla$-invariant$)$.
\item[$2.$] The Lie algebra $\gK$ is a differential module for the structure induced by $\cM\otimes_k\cM^*$.
\item[$3.$] $\gK=(\g \otimes_C K)^G$.
\end{enumerate}
\end{Lemma}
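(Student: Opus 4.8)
The plan is to prove the three items of Lemma~\ref{lemma:katz} in order, using the Tannakian dictionary and the reduced-form criteria from Section~\ref{section3}.

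\textbf{Item 1.} First I would invoke Noetherianity (already noted above): $\gK$ is the simultaneous stabilizer of finitely many $\nabla$-stable subspaces $W_1,\dots,W_r$, with $W_i\subset\Constr_i(M)$ of dimension $d_i$. The standard trick is to replace each $W_i$ by the line $\wedge^{d_i}W_i\subset\wedge^{d_i}\Constr_i(M)$ (this is $\nabla$-stable, as in the proof of $(2)\Rightarrow(3)$ in Theorem~\ref{thm:main}); a $\varphi\in\GL(M)$ stabilizes $W_i$ if and only if it stabilizes this line, at least on the level of the identity component / Lie algebra, which is all we need. Then I would package the finitely many lines into a single line inside a big direct sum / tensor construction: $L_i:=\wedge^{d_i}W_i$, and $\bigotimes_i L_i$ sits as a $\nabla$-stable line inside $\bigotimes_i\wedge^{d_i}\Constr_i(M)$, and a linear map stabilizes each $L_i$ iff (up to the same caveat) it stabilizes the tensor line. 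This yields $\gK$ as the stabilizer of a single $\nabla$-invariant line in one algebraic construction of $M$.

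\textbf{Item 2.} Here I would simply observe that $\gK$ is by definition a $k$-subspace of $\End_k(M)=\End(\cM)$ (with its differential structure $\cM\otimes_k\cM^*$, cf.\ Example~\ref{exa:EndM}), and check it is $\nabla$-stable. Because $\mathfrak{constr}$ is a Lie algebra morphism and because $\nabla$ on $\End(\cM)$ is a derivation compatible with the Lie bracket, the condition ``$\varphi$ stabilizes the line generated by the invariant-ish element $m$'' defining $\gK$ (item 1) translates into $\mathfrak{constr}(\varphi)(m)\in k\cdot m$; differentiating this relation and using $\nabla(m)=fm$ shows the $\nabla$-derivative of such a $\varphi$ again satisfies the defining condition. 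So $\gK$ is a differential submodule of $\cM\otimes_k\cM^*$.

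\textbf{Item 3.} This is the heart of the lemma and the main obstacle. The inclusion $\g\otimes_C K\subset\gK\otimes_k K$ should follow from the Tannakian characterization of $\g$: the Galois group $G$ is the stabilizer in $\GL(V)$ (with $V=(\cM\otimes_k K)^\nabla$) of all tensor lines defined over $C$, while $\gK\otimes_k K$, being the stabilizer of a $\nabla$-stable line $L$, contains the tangent space of $G$ after base change because the horizontal basis identifies the $\nabla$-stable line with a $G$-stable line over $C$ (using Remark~\ref{rmk:triviality}, that constructions of a trivial module are trivial). For the reverse, I would take $G$-invariants: $\gK$ is $\nabla$-stable (item 2), hence $\gK\otimes_k K$ carries a semilinear $G$-action, and $(\gK\otimes_k K)^G=\gK^\nabla\otimes$ is forced to land in $\g\otimes_C K$ after extending scalars because the defining line $L$ of $\gK$ becomes, over $K$, spanned by a horizontal (hence $G$-eigen) vector, and the stabilizer of that line \emph{inside} $\mathfrak{gl}(V)$ over $C$ is exactly $\g$ by the Tannakian dictionary. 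The delicate point is bookkeeping between the $k$-structure, the $K$-points, and the $C$-structure, and making sure the ``stabilizer of a line'' versus ``stabilizer up to a scalar'' distinction is handled at the Lie-algebra level (where it is automatic) rather than the group level; this is where I expect to spend the most care, and where I would lean on $G=\Gal([A])$ being defined over $C$ together with the identification in Remark~\ref{rmk:invariant-of-g} of invariants of $\cM$ with invariants of $G$ in a reduced basis.
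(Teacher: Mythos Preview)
Items 1 and 2 are essentially the paper's proof. For item~1 the paper packages the $W_i$ via $\wedge^{d}\bigl(\bigoplus_i W_i\bigr)$ with $d=\sum_i d_i$ rather than your $\bigotimes_i\wedge^{d_i}W_i$, but either Chevalley-type reduction works at the Lie-algebra level. Item~2 is the same direct computation as the paper's: with $\mathfrak{constr}(\varphi)(m)=\alpha m$ and $\nabla(m)=\beta m$ one finds $\nabla(\mathfrak{constr}(\varphi))(m)=\partial(\alpha)\,m$.

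For item 3 your underlying idea---the Tannakian correspondence between $\nabla$-stable subspaces of constructions of $M$ and $G$-stable subspaces of constructions of $V$---is exactly the paper's, but your execution is muddled. The paper works directly over $K$ with no appeal to reduced forms: from $L$ it forms $L_C:=(L\otimes_k K)^\nabla$, observes that $\g$ stabilizes $L_C$, hence $(\g\otimes_C K)^G$ stabilizes $(L_C\otimes_C K)^G=L$, and by the same token every $\nabla$-stable submodule, giving $(\g\otimes_C K)^G\subseteq\gK$; equality then follows (tersely) by running the correspondence the other way. Your version instead contains a broken formula (``$(\gK\otimes_k K)^G=\gK^\nabla\otimes$'') and, more seriously, routes through Remark~\ref{rmk:invariant-of-g}, which presupposes a reduced basis. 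That detour is unnecessary and logically awkward: Lemma~\ref{lemma:katz} is stated over $k$, where no reduced basis need exist, and the lemma is subsequently \emph{used} in the analysis of reduced forms (Theorem~\ref{katz:algebra} and its corollary), so importing reduced-form machinery here inverts the flow of the paper. Your key assertion---that the stabilizer of $L_C$ in $\mathfrak{gl}(V)$ is exactly $\g$---is in fact correct, precisely because $L$ was chosen as a Chevalley line for $\gK$ and the Tannakian dictionary then forces $L_C$ to be one for $G$; but you should argue this directly from the correspondence $W\leftrightarrow W_C$ between $\nabla$-stable and $G$-stable subspaces, not via reduced bases.
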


\begin{proof}1.~Because of the Noetherianity, a theorem of Chevalley ensures that
$\gK$ is the stabilizer of a finite family~$W_i$ of $\nabla$-stable $k$-vector spaces
contained in some constructions of~$M$.
A classical argument then shows that $\gK$ can be defined as the stabilizer of $\wedge^d\oplus_i W_i$, where~$d$ is the dimension of $\oplus_i W_i$ over~$k$. See \cite[proof of Proposition~9.3]{Katzbull}

2.~If $m$ generates a $\nabla$-stable line in some construction of~$M$, which defines $\gK$
as a stabilizer, and $\varphi\in\gK$, then
$\mathfrak{constr}(\varphi)(m)=\alpha m$ and $\nabla (m)=\beta m$, for some $\a,\be\in k$.
We conclude that
\begin{align*}
\nabla(\mathfrak{constr}(\varphi))(m)&=\nabla(\mathfrak{constr}(\varphi)(m)) -\mathfrak{constr}(\varphi)(\nabla(m))
 =\nabla(\alpha m)-\mathfrak{constr}(\varphi)(\beta m)\\
&=\partial(\alpha)m+\alpha \nabla(m)-\beta \mathfrak{constr}(\varphi)(m) =\partial(\alpha)m.
\end{align*}
Therefore $\nabla(\mathfrak{constr}(\varphi))$ belongs to $\gK$.

3.~The Tannakian correspondence will show that $\gK\cong\l(\g\otimes_k K\r)^G$. Indeed, we have seen that
$\gK$ is the stabilizer of a line $L$. Then $L_C:=(L\otimes_C K)^\nabla$ is a $C$-line defined in the
corresponding construction on the $C$-vector space of solutions $V:=(M\otimes_k K)^\nabla$.
It is stabilized by $\g$ so $\g \otimes_C K$ stabilizes $L_C \otimes_C K$.
Hence $(\g \otimes_C K)^G$ stabilises $(L_C \otimes_C K)^G$. Now $(L_C \otimes_C K)^G=L$ so
$(\g \otimes_C K)^G$ stabilises $L$ and thus $(\g \otimes_C K)^G \subseteq \gK$. In fact, the same argument shows that $(\g \otimes_C K)^G$ stabilizes any differential module in a construction and thus
$(\g \otimes_C K)^G = \gK$.

This ends the proof.
\end{proof}

\begin{Definition}\label{defn:Chevalley}
An invariant $m$ of $\cM$ in $\Constr(\cM)$ is called a \emph{Chevalley invariant} of $\gK$ when $\gK=\{ h \in \mathfrak{gl}(M) \, | \, h(m)=0 \}$.
\end{Definition}

\begin{Theorem} \label{katz:algebra} Let $\cM=(M,\nabla)$ be a differential module over $k$, $G$ the differential Galois group of $\cM$, $\g$ the Galois--Lie algebra and $\gK$ the Katz algebra. Let $k^\p$ be a reduction field so that $\cM\otimes_k k^\p$ admits a reduced basis $\ul e$.
If $\cM$ is completely reducible, then $\g\otimes_C k^\p = \gK \otimes_k k^\p$.
\end{Theorem}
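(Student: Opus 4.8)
\textit{The plan.} The idea is to use the reduced basis $\ul e$ of $\cM\otimes_k k^\p$ to place $\g$ and $\gK$ into one ambient space $M_n(k^\p)$ and to prove there the inclusion $\g\otimes_C k^\p\subseteq\gK\otimes_k k^\p$ together with the equality of the two $k^\p$-dimensions. With respect to $\ul e$ the differential Galois group is realised in $\GL_n(C)$, so $\g$ becomes a $C$-subspace of $M_n(C)$ with $A\in\g\otimes_C k^\p$ (reduced form), spanned over $k^\p$ by finitely many constant matrices (the remark recalled in Section~\ref{sebsec:GaloisGroup}); and $\gK$, being by Lemma~\ref{lemma:katz}(2) a $\nabla$-stable subspace of the construction $\cM\otimes_k\cM^\ast$, has by the equivalence $(1)\Leftrightarrow(3)$ of Theorem~\ref{thm:main} (applied to the reduced module $\cM\otimes_k k^\p$) a basis of constant matrices in $M_n(k^\p)$ as well. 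Before anything else I would, after a Seidenberg embedding, assume Assumption~\ref{assumption:seidenberg}, fix $x_0\in D$ off the poles of the relevant constructions, and work with the normalised fundamental matrix $\what U_{x_0}$, so that $\Constr(\what U_{x_0})(x_0)={\rm Id}$ for every construction. The one geometric input I will need is then: \emph{in the reduced basis, the constant coordinate vector of a $\nabla$-stable line of a construction $\Constr(\cM\otimes_k k^\p)$ spans exactly the coordinate line of the corresponding $G^\circ$-stable line of the solution space $\Constr(V)$}; this is Remark~\ref{rmk:invariant-of-g} for invariants, and for a general $\nabla$-stable line it follows from Remark~\ref{rmk:constant-invariants} applied to the attached rank-one equation.

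\textit{The inclusion.} Granting this, $\g\otimes_C k^\p\subseteq\gK\otimes_k k^\p$ goes as follows. By Lemma~\ref{lemma:katz}(1), $\gK$ is the stabiliser of a single $\nabla$-stable line $L\subseteq\Constr(M)$, so $\gK\otimes_k k^\p$ is the stabiliser of $L\otimes_k k^\p$ in $M_n(k^\p)$. Let $\vec v$ be the constant coordinate vector of $L\otimes_k k^\p$ in $\Constr(\ul e)$; by the above $C\vec v$ is a $G^\circ$-stable line of $\Constr(V)$, hence stabilised by $\g=\mathrm{Lie}(G^\circ)$, i.e.\ $\mathfrak{constr}(N)\vec v\in C\vec v\subseteq k^\p\vec v$ for every $N\in\g$. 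This says precisely that $\mathfrak{constr}(N)$ stabilises $L\otimes_k k^\p$, so $N\in\gK\otimes_k k^\p$; since $\g$ spans $\g\otimes_C k^\p$ over $k^\p$ and the stabiliser is a $k^\p$-subspace, the inclusion follows. (Run with a Chevalley line for $G$ in place of the Katz line, the same argument yields the reverse inclusion directly, which is an alternative to the dimension count below.)

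\textit{Closing by dimensions, and the obstacle.} To conclude, I would combine Lemma~\ref{lemma:katz}(2) — $\gK$ is a differential module over $k$ — with Lemma~\ref{lemma:katz}(3), which identifies its horizontal elements over the Picard--Vessiot extension $K$ with $\g$; as a differential module is trivial over its Picard--Vessiot extension, this gives $\dim_k\gK=\dim_C\g$, hence $\dim_{k^\p}(\gK\otimes_k k^\p)=\dim_{k^\p}(\g\otimes_C k^\p)$, and an inclusion of $k^\p$-vector spaces of equal finite dimension is an equality, so $\g\otimes_C k^\p=\gK\otimes_k k^\p$. The step I expect to be the real obstacle is the coordinate identification of the first paragraph: it is what forces the normalisation $\Constr(\what U_{x_0})(x_0)={\rm Id}$ and the passage through Remarks~\ref{rmk:constant-invariants} and~\ref{rmk:invariant-of-g}, and it is where Theorem~\ref{thm:main} is used. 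The complete reducibility hypothesis enters only here, indirectly: for a general base field $k$ it is what guarantees, via Theorem~\ref{thm:main-explicit}, that a reduction field $k^\p$ exists at all. Everything else is bookkeeping with stabilisers and flat base change.
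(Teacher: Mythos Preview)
Your proof is correct and takes a genuinely different route from the paper's. The paper argues that in the reduced basis $\g$ and $\gK\otimes_k k'$ have the same invariants (via Remark~\ref{rmk:invariant-of-g}), and then invokes complete reducibility to say that each of these Lie algebras is \emph{determined} by its invariants (this is precisely where reductivity of $G$ is used, through \cite{Bo91a} or \cite[Lemma~26]{ACW}); they therefore share a Chevalley invariant, and the defining linear equations over $C$ coincide. You instead prove the inclusion $\g\otimes_C k'\subseteq\gK\otimes_k k'$ directly from the Chevalley line for $\gK$ and the coordinate identification at $x_0$, and then close by a dimension count drawn from Lemma~\ref{lemma:katz}(2)--(3) together with the Tannakian equivalence. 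Both arguments rely on the normalisation $\what U_{x_0}(x_0)={\rm Id}$ (hence implicitly on Assumption~\ref{assumption:seidenberg}) to match constant coordinate vectors in the reduced basis with $G$-semi-invariants in $\Constr(V)$; the paper's proof does so through its appeal to Remark~\ref{rmk:invariant-of-g}, so your ``obstacle'' is shared.

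The notable payoff of your approach is that it never actually uses the complete reducibility hypothesis: Lemma~\ref{lemma:katz} is stated and proved for arbitrary $\cM$, and neither your inclusion step nor your dimension step appeals to reductivity. Your closing remark that the hypothesis ``enters only \ldots\ [to] guarantee, via Theorem~\ref{thm:main-explicit}, that a reduction field $k'$ exists'' is therefore not quite right: the existence of a reduction field is already provided by the Kolchin--Kovacic theorem (see \cite[Proposition~1.31 and Corollary~1.32]{vdPutSingerDifferential} and \cite[Remark~31]{ACW}) with no reducibility assumption; Theorem~\ref{thm:main-explicit} concerns the \emph{explicit} construction of a reduction matrix, not mere existence. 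So your argument in fact establishes $\g\otimes_C k'=\gK\otimes_k k'$ for any differential module admitting a reduced basis over $k'$, which is slightly more than the paper claims. In the paper's proof, by contrast, complete reducibility is genuinely used and cannot be removed without changing the argument.
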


\begin{proof} In a reduced basis of $\cM\otimes_k k'$, Remark~\ref{rmk:invariant-of-g} shows that $\g$ and $\gK\otimes_k k'$ have the same invariants. Now $\cM$ is completely reducible (equivalently, the Galois group $G$ is reductive) so both $\g$ and $\gK$ are completely reducible and this remains true after extension of scalars. From \cite[Chapter~II, Section~5.5, p.~92]{Bo91a} or \cite[Lemma~26]{ACW}, it follows that both $\g$ and $\gK\otimes_k k'$ are determined by their invariants.
These two observations show that a Chevalley invariant of~$\g$ is also a~Chevalley invariant of~$\gK\otimes_k k'$.
Let $m$ be a Chevalley invariant of $\gK\otimes_k k'$ in some construction $\Constr(M)\otimes_k k'$.
In the reduced basis, $m$ has constant coefficients: we have
$m=\Constr(\ul e)\vec v$, with $\vec v\in C^n$.
Thus, a matrix $N=(n_{ij})_{i,j}$ is in $\gK\otimes_k k'$ if and only if $\mathfrak{constr}(N) \vec v=0$. This relation yields a system of linear equations $\mathcal{L}( (n_{i,j})_{i,j})=0$
for the entries $(n_{i,j})_{i,j}$ of $N$, with coefficients in $C$.
Now, as seen above, $\vec v$ is also a Chevalley invariant of the Galois--Lie algebra $\g$.
So a matrix is in $\g$ if and only if its entries satisfy the same linear equations
$\mathcal{L}( (n_{i,j})_{i,j})=0$. Consequently, $\g\otimes_C k'$ and $\gK\otimes_k k'$ share the same constant basis so that $\g\otimes_C k'=\gK\otimes_k k'$.
\end{proof}

\begin{Definition} Let $k'$ be an extension of $k$ and $W$ be a subspace of a construction $\Constr(M)\allowbreak \otimes_k k'$. We say that $W$ is \emph{defined over $C$} when it is generated by constant matrices (i.e., when it is a $k'$-form of a $C$-space).
 \end{Definition}

\begin{Corollary}Let $k'$ be a reduction field so that $\cM\otimes_k k'$ admits a reduced basis $\ul e$. Then $\gK\otimes_k k'$ is defined over $C$. Moreover, if $A$ is the matrix of the associated linear differential system with respect to $\ul e$, then $A\in \gK\otimes_k k'$.
\end{Corollary}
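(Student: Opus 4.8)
The plan is to deduce both statements directly from the description of $\gK$ as a line stabilizer (Lemma~\ref{lemma:katz}, part~1) together with Theorem~\ref{thm:main}, which says that in a reduced basis every $\nabla$-stable line of a construction has a constant basis. First, using Lemma~\ref{lemma:katz}(1), I would fix a $\nabla$-stable line $L$ in some construction $\Constr(\cM)$ such that $\gK$ is exactly the set of $N\in\mathfrak{gl}(M)$ whose induced action on $\Constr(M)$ maps $L$ into itself. Fixing a basis $\ul{e}_0$ of $\cM$ and writing $\vec v_0\in k^N$ for the coordinates, in the canonical basis $\Constr(\ul e_0)$, of a generator of $L$, this means that $\gK$ is the kernel of the $k$-linear map $N\mapsto \constr(N)\,\vec v_0\wedge\vec v_0$ with values in $\wedge^2(k^N)$. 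Since kernels commute with the field extension $k'/k$, the $k'$-space $\gK\otimes_k k'$ is the stabilizer in $\mathfrak{gl}_n(k')$ of the $\nabla$-stable line $L\otimes_k k'$ inside $\Constr(\cM\otimes_k k')$.

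Now $\ul e$ is a reduced basis of $\cM\otimes_k k'$, so Theorem~\ref{thm:main} (implication $1)\Rightarrow 2)$) applies to the $\nabla$-stable line $L\otimes_k k'$: it has a constant basis, i.e., a generator $m=\Constr(\ul e)\vec v$ with $\vec v\in C^N$. Expressing the stabilizer condition in the basis $\Constr(\ul e)$, a matrix $N$ lies in $\gK\otimes_k k'$ if and only if $\constr(N)\vec v\wedge\vec v=0$. The point is that the entries of $\constr(N)$ are linear forms in the entries of $N$ given by universal formulas, hence with coefficients in $\Z$, while $\vec v$ has coordinates in $C$; so $N\mapsto\constr(N)\vec v\wedge\vec v$ is represented by a matrix with entries in $C$. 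Consequently $\gK\otimes_k k'$ is the $k'$-kernel of a matrix defined over $C$, hence of the form $W\otimes_C k'$ for a $C$-subspace $W\subseteq\mathfrak{gl}_n(C)$, and is therefore generated by constant matrices, i.e., defined over $C$. (When $\cM$ is completely reducible this is also immediate from Theorem~\ref{katz:algebra}, since then $\gK\otimes_k k'=\g\otimes_C k'$.)

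For the last assertion I would check that $A$ itself satisfies the stabilizer condition. Since $L\otimes_k k'$ is $\nabla$-stable we have $\nabla(m)=\gamma m$ for some $\gamma\in k'$; on the other hand $m=\Constr(\ul e)\vec v$ with $\vec v$ constant and the matrix of $\nabla$ in the basis $\Constr(\ul e)$ is $-\constr(A)$, so, using $\partial\vec v=0$,
\[
\nabla(m)=\Constr(\ul e)\big(\partial\vec v-\constr(A)\vec v\big)=-\Constr(\ul e)\big(\constr(A)\vec v\big).
\]
Comparing the two expressions gives $\constr(A)\vec v=-\gamma\vec v\in k'\vec v$, that is, $A$ stabilizes $L\otimes_k k'$, whence $A\in\gK\otimes_k k'$.

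The routine points are the commutation of kernels with base change and the bookkeeping with the canonical bases $\Constr(\ul e)$; the only step requiring a little attention is the observation that, once $\vec v$ has constant coordinates, the line-stabilizer condition is cut out by linear equations with coefficients in $C$, which is precisely where the reduced-basis hypothesis (through Theorem~\ref{thm:main}) and the universality of the formulas defining $\constr$ enter. I do not expect a genuine obstacle: the corollary is essentially a repackaging of the linear-algebra computation in the proof of Theorem~\ref{katz:algebra}, now freed from the complete-reducibility assumption.
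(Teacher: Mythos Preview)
Your argument for the first assertion (that $\gK\otimes_k k'$ is defined over $C$) is exactly the paper's: use Lemma~\ref{lemma:katz}(1) to realize $\gK$ as the stabilizer of a single $\nabla$-stable line, then invoke Theorem~\ref{thm:main} to get a constant generator in the reduced basis, so the stabilizer equations have coefficients in $C$.

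For the second assertion ($A\in\gK\otimes_k k'$), however, you take a genuinely different route from the paper. The paper simply combines the definition of a reduced basis, which gives $A\in\g(k')\subset\g\otimes_C k'$, with Theorem~\ref{katz:algebra}, which identifies $\g\otimes_C k'=\gK\otimes_k k'$. You instead verify directly that $\constr(A)$ maps the constant generator $\vec v$ of $L\otimes_k k'$ to a multiple of itself, using $\partial\vec v=0$ and the $\nabla$-stability of $L$. Your computation is self-contained and does not pass through Theorem~\ref{katz:algebra}; in particular it does not rely on the complete-reducibility hypothesis that Theorem~\ref{katz:algebra} carries, so it establishes the corollary in the generality in which it is stated. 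The paper's route is shorter to write down but leans on the heavier Theorem~\ref{katz:algebra}; yours is more elementary and, as you correctly note at the end, shows that this part of the linear-algebra computation survives without complete reducibility.
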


\begin{proof} Let $L$ be a $\nabla$-invariant line in some $\Constr(M)$ such that $\gK$ is the stabilizer of $L$. By Theorem~\ref{thm:main}, we can choose a generator $m$ of {$L\otimes_C k'$} whose coordinates are constant with respect to the reduced basis. This shows that {$\gK\otimes_k k'$ (but not $\gK$ in general)} is defined over $C$. The fact that $A\in \gK\otimes_k k'$ follows from the definition of a reduced basis and from Theorem~\ref{katz:algebra}.
\end{proof}

\begin{Example}We illustrate the previous results on an example where everything can be checked by hand calculations. Let \[A = \left( \begin{matrix} 0&1\\ x&\frac{1}{2x}
\end{matrix} \right).
\]
The Galois group $G$ is a central extension of the infinite dihedral group (see \cite[Example~6.1]{ACW}). The connected component $G^\circ$ of $G$ containing $1$ is the multiplicative group $G_m$ with Lie algebra~$\mathfrak{g}_m$ generated by \[ \left( \begin{matrix} 1 &0 \\ 0&-1\end{matrix} \right). \] Furthermore, using~\cite{barkatou2016computing}, we can see that the Katz algebra is $1$-dimensional and it is generated by
\[ N_1:=\left( \begin{matrix} 0&\frac1x\\ 1&0
\end{matrix} \right).
\]
We have $\nabla(N_1):=N_1'-[A,N_1] = -\frac{1}{2x} N_1$. Over $k=C(x)$,
{no nonzero multiple of $N_1$
is conjugated to a constant matrix.
However, over the finite extension $k':=C(\sqrt{x})$, $\sqrt{x}N_1$ is conjugated to a constant matrix.}
Indeed,
we have $N_1 = P^{-1} D P$ with
\[ D = \left( \begin{matrix} {\frac {1}{\sqrt {x}}}&0
\\ 0&-\frac {1}{\sqrt {x}}\end{matrix} \right)
= \frac{1}{\sqrt{x}}\left(\begin{matrix} 1 &0 \\ 0&-1\end{matrix} \right),
\qquad
P= \left(\begin {matrix} 1&-1\\ \sqrt {x}&\sqrt{x}\end{matrix} \right).
\]

We see that $\gK\otimes_k k'$ is now generated by the (constant) generator of the Galois--Lie algebra~$\mathfrak{g}_m$ and, applying the gauge transformation $P$, we have the reduced form
\[ P[A]= \left(\begin{matrix} \sqrt {x}&0\\ 0&-\sqrt {x}\end{matrix} \right)
=\sqrt{x} \left(\begin{matrix} 1 &0 \\ 0&-1\end{matrix} \right).
\]
This shows that $\gK\otimes_k k'=\mathfrak{g}_m \otimes_C k'$ and $P[A] \in \gK\otimes_k k'$. Note
that $\gK$ is \emph{not} defined over $C$ whereas $\gK\otimes_k k'$ is.
\end{Example}
\begin{Remark}Following the definition of the eigenring $\mathcal{E}([A])$ in Example~\ref{exa:EndM},
any matrix in~$\mathcal{E}([A])$ maps a solution of~$[A]$ to a solution of~$[A]$.
Example~\ref{rmk:eigenring} showed that, in a reduced basis, the eigenring is generated by constant matrices which commute with~$A$. In reduced form, we have $A=\sum\limits_{i=1}^d f_i N_i$ where $f_i\in k'$, the $f_i$ are linearly independent over $C$ and the $N_i$ generate~$\mathfrak{g}$
(see \cite[Definition~6 and Remark~7]{ACW}). Theorem~\ref{katz:algebra} shows that the $N_i$ also generate $\gK\otimes_k k'$ as a Lie algebra. Given a constant matrix $T\in \mathcal{E}([A])$, we have $[A,T]=0$, i.e.,
$\sum\limits_{i=1}^d f_i [N_i,T]=0$.
As the $f_i$ are linearly independent over $C$, it follows that we have $[N_i,T]=0$ for all $i$.
So, in a reduced basis, the eigenring becomes the set of constant matrices which commute with $\gK\otimes_k k'$ (or, equivalently, with $\mathfrak{g}$).
\end{Remark}

In \cite{Andregrothendieck}, Andr\'e warns that spaces which are stable under $\gK$ may not be stable under $\nabla$. We can see this easily using reduced forms. Consider a linear differential system $[A]\colon \partial \vec y=A\vec y$ having all its solutions algebraic over~$k$. Then $\gK=0$ so that anything is stable under~$\gK$. In a reduced basis, the matrix of the linear differential system is the zero matrix and $\nabla$ coincides with $\partial=\frac{{\rm d}}{{\rm d}x}$. A random vector in~$k^n$ is not stable under $\nabla$ even though it is stable under~$\gK$. However, any line defined over~$C$ is clearly stable under~$\partial$ and hence under~$\nabla$. The next result builds on this observation to characterize which spaces, among those which are stable under~$\gK$, are stable under $\nabla$.

\begin{Proposition}\label{stable-sous-gk} Let $\cM=(M,\nabla)$ be a completely reducible differential module over~$k$ and~$k'$ denote a reduction field for $\cM$. Let $W$ be a subspace of a construction $\Constr(M)\otimes_k k'$. Then~$W$ is stable under $\nabla$ if and only if both conditions below are fulfilled:
\begin{enumerate}\itemsep=0pt
\item[$1)$] $W$ is stable under $\gK \otimes_k k'$;
\item[$2)$] $W$ is defined over $C$.
\end{enumerate}
\end{Proposition}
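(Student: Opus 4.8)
The plan is to carry out everything in the reduced basis $\ul e$ of $\cM\otimes_k k'$. There the matrix of $\na$ on any construction $\Constr(\cM)\otimes_k k'\cong\Constr(\cM\otimes_k k')$ with respect to $\Constr(\ul e)$ equals $-\constr(A)$, so $\na(\Constr(\ul e)\vec w)=\Constr(\ul e)(\partial\vec w-\constr(A)\vec w)$ for any coordinate vector $\vec w\in (k')^N$, where $N=\dim_k\Constr(M)$. Since $\ul e$ is reduced, $A$ is a $k'$-point of $\g$, and Theorem~\ref{katz:algebra} (using that $\cM$ is completely reducible) gives $\g\otimes_C k'=\gK\otimes_k k'$; in particular $A\in\gK\otimes_k k'$. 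I also fix a Wei--Norman decomposition $A=\sum_{i=1}^{d}f_i N_i$ with $f_i\in k'$ linearly independent over $C$ and $N_i$ constant matrices that generate $\g$, hence lie in $\gK\otimes_k k'$. Finally, ``$W$ defined over $C$'' means $W=\Constr(\ul e)(W_0\otimes_C k')$ for some $C$-subspace $W_0\subseteq C^N$, equivalently $W$ has a basis of vectors with constant coordinates.

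For the implication ``$1)$ and $2)\Rightarrow W$ is $\na$-stable'': writing a general vector of $W$ as $\Constr(\ul e)\vec w$ with $\vec w=\sum_j a_j\vec w_j$, $a_j\in k'$, $\vec w_j\in W_0$, one has $\partial\vec w=\sum_j\partial(a_j)\vec w_j\in W_0\otimes_C k'$ because the $\vec w_j$ are constant, and $\constr(A)\vec w\in W_0\otimes_C k'$ because $A\in\gK\otimes_k k'$ stabilizes $W$. Hence $\na(\Constr(\ul e)\vec w)=\Constr(\ul e)(\partial\vec w-\constr(A)\vec w)\in W$, so $W$ is $\na$-stable.

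For the converse, assume $W$ is $\na$-stable. Applying the equivalence $1)\Leftrightarrow 3)$ of Theorem~\ref{thm:main} to the reduced system $[A]$ of $\cM\otimes_k k'$ shows at once that $W$ admits a constant basis, i.e.\ $W$ is defined over $C$ (this is assertion $2)$); write $W=\Constr(\ul e)(W_0\otimes_C k')$. For assertion $1)$, take any constant $\vec w\in W_0$; then $\partial\vec w=0$, so $\na$-stability forces $\constr(A)\vec w=\sum_i f_i\,\constr(N_i)\vec w\in W_0\otimes_C k'$. The key step is then to separate the $N_i$: choosing a $C$-complement $C^N=W_0\oplus W_0'$, the $W_0'$-component of $\sum_i f_i\,\constr(N_i)\vec w$ has the form $\sum_i f_i u_i$ with $u_i\in W_0'$ constant, and since a $C$-basis of $W_0'$ remains $k'$-linearly independent in $(k')^N$, the $C$-linear independence of the $f_i$ forces every $u_i=0$, i.e.\ $\constr(N_i)\vec w\in W_0$. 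Letting $\vec w$ run over a basis of $W_0$ gives $\constr(N_i)(W_0\otimes_C k')\subseteq W_0\otimes_C k'$ for each $i$; therefore $W$ is stable under the associative (hence Lie) algebra generated by the $\constr(N_i)$ and under its $k'$-span, which is $\constr(\g\otimes_C k')=\constr(\gK\otimes_k k')$. This is assertion $1)$.

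The only genuine obstacle is this last argument: $\na$-stability a priori constrains only the single operator $\constr(A)$, whereas one needs stability under the whole algebra $\gK\otimes_k k'$; the Wei--Norman decomposition, together with the $C$-linear independence of its coefficients, is exactly the device that spreads the single constraint over the generators $N_i$, and it is through Theorem~\ref{katz:algebra} that complete reducibility is used. Everything else is routine bookkeeping in the reduced basis.
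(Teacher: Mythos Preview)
Your proof is correct and follows essentially the same route as the paper for the ``if'' direction: both of you pick a constant basis of $W$, write $A=\sum_i f_i N_i$ in the reduced basis, and observe that $\nabla$ then acts through $-\constr(A)=-\sum_i f_i\,\constr(N_i)$, which preserves $W$ by hypothesis~(1).

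For the ``only if'' direction you and the paper both obtain~(2) from Theorem~\ref{thm:main}. For~(1), however, the paper simply invokes the definition of the Katz algebra, which is terse (strictly speaking, $\gK$ is defined as the stabilizer of $\nabla$-stable $k$-subspaces, whereas $W$ lives over $k'$; one implicitly needs that the Katz algebra of $\cM\otimes_k k'$ coincides with $\gK\otimes_k k'$, which is immediate from Theorem~\ref{katz:algebra}). Your argument is more explicit and self-contained: from $\nabla$-stability and constancy of a basis vector $\vec w$ you extract $\sum_i f_i\,\constr(N_i)\vec w\in W_0\otimes_C k'$, then use the $C$-linear independence of the $f_i$ to separate the summands and conclude $\constr(N_i)\vec w\in W_0$ for each $i$. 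This is exactly the ``spreading'' device you describe, and it cleanly yields stability under the Lie algebra generated by the $N_i$, which is $\g=\gK\otimes_k k'$ in the reduced basis. The payoff of your route is that the use of complete reducibility (through Theorem~\ref{katz:algebra}) is made visible, and the argument does not rely on identifying the Katz algebra after base change; the paper's route is shorter once one accepts that identification.
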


\begin{proof}{The ``only if'' part follows from the definition of the Katz algebra~(1) and from Theorem~\ref{thm:main}(2). Now, assume that $W$ is stable under $\gK \otimes_k k'$ and defined over~$C$.}
Let $C_1, \ldots, C_s$ denote a constant basis of $W$. Let $A$ be the matrix of the linear differential system associated with $\cM\otimes_k k'$ in a reduced basis. We have $A=\sum_j f_j(x) N_j$, where $f_j(x)\in k'$ and the~$N_j$ form a~(constant) basis of $\gK$ (and of $\g$). As the $C_i$ are constant, $\nabla$~acts on them via $\nabla(C_i)=-\mathfrak{constr}(A) C_i$ and thus $\nabla(C_i)= - \sum_j f_j(x) \mathfrak{constr}(N_j) C_i$. Now, by hypothesis $W$ is stable under $\gK \otimes_k k'$ so that $\mathfrak{constr}(N_j) C_i$ is a linear combination (over $k'$) of the $C_l$. It follows that $\nabla(C_i)$ is in $W$ as announced.
\end{proof}

Each space in $\Constr(V)$ which is invariant under $\mathfrak{g}$ is in (Tannakian) correspondence with a~submodule of $\Constr(M)$, invariant under $\nabla$ and hence under $\gK$. A reciprocal property would be to characterize, among all subspaces in any~\Constr(M) that are stable under~$\gK$, which ones are stable under $\nabla$ (and hence are in Tannakian correspondence with a $\mathfrak{g}$-module in $\Constr(V)$. Proposition~\ref{stable-sous-gk} shows that this it is possible to do that at the cost of extending scalars. When we study the subspaces of $\Constr(M)\otimes_k k'$ which are invariant under $\gK\otimes_k k'$, the ones which are invariant under $\nabla$ (and hence in tannakian correspondence with a $\mathfrak{g}$-module) are exactly those which admit a constant basis.

\subsection*{Acknowledgements}

We are grateful to the anonymous referees for their relevant suggestions which helped us to improve the clarity and quality of this work.

\pdfbookmark[1]{References}{ref}
\LastPageEnding


\begin{thebibliography}{99}
\footnotesize\itemsep=0pt

\bibitem{AmMiPo18a}
Amzallag E., Minchenko A., Pogudin G., Degree bound for toric envelope of a
 linear algebraic group, \href{https://arxiv.org/abs/1809.06489}{arXiv:1809.06489}.

\bibitem{Andregrothendieck}
Andr\'e Y., Sur la conjecture des {$p$}-courbures de {G}rothendieck--{K}atz
 et un probl\`eme de {D}work, in Geometric Aspects of {D}work Theory,
 {V}ols.~{I},~{II}, Walter de Gruyter, Berlin, 2004, 55--112.

\bibitem{ACW}
Aparicio-Monforte A., Compoint E., Weil J.-A., A characterization of reduced
 forms of linear differential systems, \href{https://doi.org/10.1016/j.jpaa.2012.11.007}{\textit{J.~Pure Appl. Algebra}} \textbf{217} (2013), 1504--1516, \href{https://arxiv.org/abs/1206.6661}{arXiv:1206.6661}.

\bibitem{barkatou2016computing}
Barkatou M., Cluzeau T., Weil J.-A., Di~Vizio L., Computing the {L}ie algebra of
 the differential {G}alois group of a linear differential system, in
 Proceedings of the 2016 {ACM} {I}nternational {S}ymposium on {S}ymbolic and
 {A}lgebraic {C}omputation, \href{https://doi.org/10.1145/2930889.2930932}{ACM}, New York, 2016, 63--70.

\bibitem{Be92c}
Bertrand D., Groupes alg\'ebriques et \'equations diff\'erentielles
 lin\'eaires, \textit{Ast\'erisque} \textbf{206} (1992), Exp.~No.~750, 4, 183--204.

\bibitem{Bo91a}
Borel A., Linear algebraic groups, 2nd~ed., \textit{Graduate Texts in Mathematics}, Vol.~126, \href{https://doi.org/10.1007/978-1-4612-0941-6}{Springer-Verlag}, New York, 1991.

\bibitem{CoSa18a}
Combot T., Sanabria C., A symplectic {K}ovacic's algorithm in dimension 4, in
 I{SSAC}'18~-- {P}roceedings of the 2018 {ACM} {I}nternational {S}ymposium on
 {S}ymbolic and {A}lgebraic {C}omputation, \href{https://doi.org/10.1145/3208976.3209005}{ACM}, New York, 2018, 143--150, \href{https://arxiv.org/abs/1802.01023}{arXiv:1802.01023}.

\bibitem{Compoint-Singer}
Compoint E., Singer M.F., Computing {G}alois groups of completely reducible
 differential equations, \href{https://doi.org/10.1006/jsco.1999.0311}{\textit{J.~Symbolic Comput.}} \textbf{28} (1999), 473--494.

\bibitem{DrWe19a}
Dreyfus T., Weil J.-A., Computing the {L}ie algebra of the differential {G}alois
 group: the reducible case, \href{https://arxiv.org/abs/1904.07925}{arXiv:1904.07925}.

\bibitem{Feng-hrushovski}
Feng R., Hrushovski's algorithm for computing the {G}alois group of a linear
 differential equation, \href{https://doi.org/10.1016/j.aam.2015.01.001}{\textit{Adv. in Appl. Math.}} \textbf{65} (2015), 1--37, \href{https://arxiv.org/abs/1312.5029}{arXiv:1312.5029}.

\bibitem{Hessinger}
Hessinger S.A., Computing the {G}alois group of a linear differential equation
 of order four, \href{https://doi.org/10.1007/s002000000055}{\textit{Appl. Algebra Engrg. Comm. Comput.}} \textbf{11} (2001), 489--536.

\bibitem{Hrushcomp}
Hrushovski E., Computing the {G}alois group of a linear differential equation,
 in Differential {G}alois Theory ({B}\c{e}dlewo, 2001), \textit{Banach Center
 Publ.}, Vol.~58, \href{https://doi.org/10.4064/bc58-0-9}{Polish Acad. Sci. Inst. Math.}, Warsaw, 2002, 97--138.

\bibitem{Katzbull}
Katz N.M., A conjecture in the arithmetic theory of differential equations,
 \href{https://doi.org/10.24033/bsmf.1960}{\textit{Bull. Soc. Math. France}} \textbf{110} (1982), 203--239.

\bibitem{Kovacic-algo}
Kovacic J.J., An algorithm for solving second order linear homogeneous
 differential equations, \href{https://doi.org/10.1016/S0747-7171(86)80010-4}{\textit{J.~Symbolic Comput.}} \textbf{2} (1986), 3--43.

\bibitem{NgPu10a}
Nguyen K.A., van~der Put M., Solving linear differential equations,
 \href{https://doi.org/10.4310/PAMQ.2010.v6.n1.a5}{\textit{Pure Appl. Math.~Q.}} \textbf{6} (2010), 173--208.

\bibitem{Person}
Person A.C., Solving homogeneous linear differential equations of order 4 in
 terms of equations of smaller order, Ph.D.~Thesis, {N}orth Carolina State
 University, 2002.

\bibitem{seidprimel}
Seidenberg A., Some basic theorems in differential algebra (characteristic
 {$p$}, arbitrary), \href{https://doi.org/10.2307/1990828}{\textit{Trans. Amer. Math. Soc.}} \textbf{73} (1952), 174--190.

\bibitem{seiden}
Seidenberg A., Contribution to the {P}icard--{V}essiot theory of homogeneous
 linear differential equations, \href{https://doi.org/10.2307/2372470}{\textit{Amer.~J. Math.}} \textbf{78} (1956), 808--818.

\bibitem{Singer-Ulmer-2nd3rdorder}
Singer M.F., Ulmer F., Galois groups of second and third order linear
 differential equations, \href{https://doi.org/10.1006/jsco.1993.1032}{\textit{J.~Symbolic Comput.}} \textbf{16} (1993), 9--36.

\bibitem{Ho07b}
van~der Hoeven J., Around the numeric-symbolic computation of differential
 {G}alois groups, \href{https://doi.org/10.1016/j.jsc.2006.03.007}{\textit{J.~Symbolic Comput.}} \textbf{42} (2007), 236--264.

\bibitem{vdPutSingerDifferential}
van~der Put M., Singer M.F., Galois theory of linear differential equations,
 \textit{Grundlehren der Mathematischen Wissenschaften}, Vol.~328,
 \href{https://doi.org/10.1007/978-3-642-55750-7}{Springer-Verlag}, Berlin, 2003.

\bibitem{vanheoij-4thorder}
van Hoeij M., Decomposing a 4th order linear differential equation as a
 symmetric product, in Differential {G}alois Theory ({B}\c{e}dlewo, 2001),
 \textit{Banach Center Publ.}, Vol.~58, \href{https://doi.org/10.4064/bc58-0-8}{Polish Acad. Sci. Inst. Math.}, Warsaw, 2002, 89--96.

\bibitem{HoRaUlWe99a}
van Hoeij M., Ragot J.F., Ulmer F., Weil J.-A., Liouvillian solutions of linear
 differential equations of order three and higher, \href{https://doi.org/10.1006/jsco.1999.0316}{\textit{J.~Symbolic Comput.}} \textbf{28} (1999), 589--609.

\bibitem{Wei-Norman}
Wei J., Norman E., On global representations of the solutions of linear
 differential equations as a product of exponentials, \href{https://doi.org/10.2307/2034065}{\textit{Proc. Amer. Math. Soc.}} \textbf{15} (1964), 327--334.

\end{thebibliography}
\end{document}